\newtheorem{theorem}{Theorem}
\newtheorem{corollary}[theorem]{Corollary}
\newtheorem{proposition}[theorem]{Proposition}
\newtheorem{definition}{Definition}
\newenvironment{proof}{\begin{trivlist}
    \item[\hskip\labelsep{\bf Proof.}]}{$\hfill\Box$\end{trivlist}}
\theoremstyle{plain} \theorembodyfont{\rmfamily}
\newtheorem{remark}{Remark}}
\theoremstyle{plain} \theorembodyfont{\rmfamily}
\newtheorem{example}{Example}}
\newcommand{\bsgamma}{{\boldsymbol{\gamma}}}
\newcommand{\bst}{{\boldsymbol{t}}}
\newcommand{\bsx}{{\boldsymbol{x}}}
\newcommand{\bszero}{{\boldsymbol{0}}}
\newcommand{\rd}{\,\mathrm{d}}
\newcommand{\re}{\mathrm{e}}
\newcommand{\bbR}{\mathbb{R}}
\newcommand{\bbN}{\mathbb{N}}
\newcommand{\calA}{\mathcal{A}}
\newcommand{\calL}{\mathcal{L}}
\newcommand{\calF}{\mathcal{F}}
\newcommand{\calG}{\mathcal{G}}
\newcommand{\calI}{\mathcal{I}}
\newcommand{\calS}{\mathcal{S}}
\newcommand{\mask}[1]{}
\newcommand{\esup}{\operatornamewithlimits{ess\,sup}}
\newcommand{\e}{{\varepsilon}}
\newcommand{\setu}{{\mathfrak{u}}}
\newcommand{\setv}{{\mathfrak{v}}}
\newcommand{\setw}{{\mathfrak{w}}}
\newcommand{\norm}[1]{\left\Vert#1\right\Vert}
\newcommand{\abs}[1]{\left\vert#1\right\vert}
\newcommand{\sset}{[s]}
\newcommand{\dimtr}{{\rm dim}^{\rm trnc}}
\newcommand{\EXCLUDE}[1]{}
\date{}
\begin{document}

\title{Truncation Dimension for Linear Problems on Multivariate Function Spaces}

\author{Aicke Hinrichs\thanks{A. Hinrichs, P. Kritzer, and F. Pillichshammer gratefully acknowledge the support of the Erwin Schr\"odinger International Institute for Mathematics and Physics (ESI) in Vienna under the thematic programme ``Tractability of High Dimensional Problems and Discrepancy''.}, Peter Kritzer\thanks{P. Kritzer is supported by the Austrian
Science Fund (FWF), Project F5506-N26.}, Friedrich Pillichshammer\thanks{F. Pillichshammer is supported by the 
Austrian Science Fund (FWF) Project F5509-N26. Both projects are parts
of the Special Research Program "Quasi-Monte Carlo Methods:
Theory and Applications".}, G.W. Wasilkowski}

\maketitle

\abstract{The paper considers linear problems on weighted
spaces of multivariate functions of many variables. The main questions
addressed are: When is it possible to approximate
the solution for the
original function of very many variables by the solution for the same function; however with
all but the first $k$ variables set to zero, so that the corresponding
error is small? What is the {\em truncation dimension}, i.e., the
smallest number $k=k(\e)$ such that the
corresponding error is bounded by a given error demand $\e$?
Surprisingly, $k(\e)$ could be very small
even for weights with a modest speed of convergence to zero.}

\section{Introduction}
This paper is a continuation of our study initiated in  \cite{KrPiWa,KrPiWa16}
on the truncation dimension for functions with a huge (or even
infinite) number of variables. In \cite{KrPiWa} the problem of numerical integration and in \cite{KrPiWa16} function approximation is studied, but in both cases only anchored Sobolev spaces are considered. Here our focus is on more general linear problems and function spaces.

We start by providing the definition of the truncation dimension. Let $\calF$ be a normed linear space of $s$-variate functions
defined on $D^s$. Here
$s$ is very large or even infinite. We assume that $D$ is a possibly unbounded interval of $\bbR$ such that $0 \in D$.
Let
\[\calS\,:\,\calF\,\to\,\calG 
\]
be a continuous linear operator acting from $\calF$ to another normed linear space $\calG$.
Consider the problem of approximating $\calS(f)$ for $f \in \calF$. \iffalse Since the number $s$
of variables is huge (or even $s=\infty$), it is natural to ask whether
approximations of functions $f_k(\bsx)=f(x_1,\dots,x_k,0,0,\dots)$,
that depend only on $k\ll s$ variables (here $\ll$ indicates in an
informal way that $k$ is ``much'' smaller than $s$), 
are good enough to approximate $\calS(f)$. This leads
to the following notion of {\em truncation dimension}.
\fi
Suppose that for any $f\in\calF$ and any $k\in\bbN:=\{1,2,3,\ldots\}$
the function
$f_k$ obtained from $f$ by setting all the variables $x_j$ with $j>k$
to be zero, 
\begin{equation}\label{starstar}
f_k(\bsx)\,:=\,f(x_1,\dots,x_k,0,0,\dots) \ \ \ \mbox{for $\bsx=(x_1,x_2,\ldots,x_s)$,}
\end{equation}
also belongs to $\calF$.
It is natural to ask whether
approximations of $S(f_k)$,
where $k\ll s$ (here $\ll$ indicates in an
informal way that $k$ is ``much'' smaller than $s$), 
are good enough to approximate $\calS(f)$. This leads
to the following notion of {\em truncation error} and {\em truncation dimension}.

For given $k\in\bbN$, by the {\em $k^{th}$ truncation error} we
mean
\[
{\rm err^{trnc}}(k)\,:=\,\sup_{f\in\calF}
\frac{\|\calS(f-f_k)\|_{\calG}}{\|f-f_k\|_{\calF}}.
\]
\begin{definition}\label{def:truncdim}\rm
For a given error demand $\e>0$, the
{\em $\e$-truncation dimension for approximating $\calS$}
(or {\em truncation dimension} for short) is defined as
\[
{\rm dim^{trnc}}(\e)\,:=\,\min\left\{k\,:\,{\rm err^{trnc}}(k)\,\le\,\e
\right\}.   
\]
\end{definition}

It is the main aim of this work to relate the truncation dimension to the error of approximation of $\calS(f)$. We prove such a relation in Theorem~\ref{thm:main}. 

We stress that the truncation dimension is a property of the problem.
In particular, it depends on the spaces $\calF,\calG$, the operator
$\calS$, and the error demand $\e$. This is in  contrast with the
truncation dimension concept in statistical literature (see e.g.
\cite{CaMoOw,LiOw,Ow,WaKTF}), which depends
on the particular function under consideration. Moreover,
it is defined via ANOVA decomposition which 
is hard to approximate directly using only
a finite number of function values.

In the rest of the paper we estimate the truncation dimension for
a special, yet important, class of $\bsgamma$-weighted spaces
$\calF$ with {\em anchored decomposition} and $\calS$ having a
tensor product form. 

Roughly speaking (see Section~\ref{sec2} for details), functions from $\calF$ have a unique
decomposition
\[
f(\bsx)\,=\,\sum_\setu f_\setu(\bsx),
\]
where the sum is with respect to finite subsets 
$\setu\subseteq\{1,\dots,s\}$ (or $\setu\subset\bbN$ if $s=\infty$),
and each function $f_\setu$ depends only on the variables listed in 
$\setu$. Moreover, each $f_\setu$ belongs to a normed space $F_\setu$,
and we define the norm in $\calF$ by 
\[
   \|f\|_\calF\,=\,
\left(\sum_{\setu}\gamma_\setu^{-p}\,
\|f_\setu\|_{F_\setu}^p\right)^{1/p}<\infty
\]
for some $1\le p\le \infty$ and positive numbers $\gamma_\setu$
called weights. Of course $\|f\|_\calF=\sup_\setu \gamma_\setu^{-1}\,
\|f_\setu\|_{F_\setu}$ when $p=\infty$.

For $s=\infty$, these spaces include 
spaces of special functions of the form
\[
f(\bsx)\,=\,g({\rm X}(t)),
\]
where ${\rm X}(t)$ is the value of the stochastic
process 
${\rm X}\,=\,\sum_{i=1}^\infty x_i\,\phi_i$ at time $t$.
Here the $x_i$'s are i.i.d.~random variables and the base functions
$\phi_i$ converge to zero sufficiently fast.
Clearly
\[f(\bsx)\,=\,g\left(\sum_{i=1}^\infty x_i\,\phi_i(t)\right)
\quad\mbox{whereas}\quad
f_k(\bsx)\,=\,g\left(\sum_{i=1}^kx_i\,\phi_i(t)\right). 
\]

Functions of the form
\[
f(\bsx)=g\left(\sum_{i=1}^\infty x_i\,\phi_i(t)\right)
\]
appear in a number of applications including partial differential with
random coefficients and stochastic differential equation. The spaces
considered in the paper are more complex and, hence, our positive results
are even more important.

Assume that the weights
$\gamma_\setu$ can be written as product weights of the form
\[\gamma_\setu\,=\,\prod_{j\in\setu}\phi_j^\beta(t)
\]
for some $\beta\le1$. 
We prove that then  (cf.~Theorem \ref{thm:simple}) 
the truncation dimension can be bounded from above by
the smallest integer $k$ such that 
\[
\left(\prod_{j=1}^\infty(1+(C_1\phi_j^\beta (t))^{p^*}) 
\left(1-\exp\left(-C_1^{p^*} \sum_{j=k+1}^\infty 
(\phi_j^\beta (t))^{p^*}\right)\right)\right)^{1/p^*}
\le \varepsilon. 
\]
Here and elsewhere, $p^*$ 
is the conjugate of $p$ (i.e., $\frac{1}{p}+\frac{1}{p^*}=1$), $C_1>0$ is a number such that $\|S_1\| \le C_1$ 
and $\|S_1\|$ is the norm of the operator $\calS$ restricted to the
space $F_{\{1\}}$ of functions depending only on one variable.
(One has to apply the usual adaptions if $p=1$, cf.~Theorem
\ref{thm:simple} again.) From this result it can be seen that faster
decay of the $\phi_j^\beta(t)$ leads to smaller truncation
dimension.

We illustrate this for different values of $\e$, $p$, and 
$\gamma_\setu$.  We use product weights of the form
$\gamma_\setu=\prod_{j\in \setu} j^{-\alpha}$ for 
$\alpha \in \{2,3,4,5\}$. For simplicity, we assume that $C_1=1$.
For $p=2$ we have:

\[
\begin{array}{c||c|c|c|c|c|c||l}
\varepsilon &  10^{-1}  &  10^{-2}  & 10^{-3}  & 10^{-4}  & 10^{-5}  &  10^{-6} & \\
\hline
{\rm dim^{trnc}}(\varepsilon)& 4  &    19  & 90 & 416 & 1933  &  8973
&\alpha=2\\
\hline
{\rm dim^{trnc}}(\varepsilon)& 2   &  5   &  13   & 33     &  84      &  210
&\alpha=3\\
\hline
{\rm dim^{trnc}}(\varepsilon)& 2   &  3   &  6   & 12     &   22      &  43
&\alpha=4\\
\hline
{\rm dim^{trnc}}(\varepsilon)& 1   &  2   &  4   & 7     &  11       &  18
&\alpha=5
\end{array}
\] 
In particular, for the error demand $\e=10^{-3}$ it is enough to work
with only $90$ variables when $\alpha=2$, only $13$ variables when $\alpha=3$,
and with $6$ or $4$ when $\alpha=4$ or $5$, respectively. 

For $p=1$ we have ${\rm dim^{trnc}}(\varepsilon)=\lceil \varepsilon^{-1/\alpha}-1\rceil$, which leads to even better results. The following table already appeared in \cite{KrPiWa16}:
\[
\begin{array}{c||c|c|c|c|c|c||l}
\varepsilon &  10^{-1}  &  10^{-2}  & 10^{-3}  & 10^{-4}  & 10^{-5}  &  10^{-6} & \\
\hline
{\rm dim^{trnc}}(\varepsilon)& 3  &    9  & 31 & 99 & 316  &  999
&\alpha=2\\
\hline
{\rm dim^{trnc}}(\varepsilon)& 2   &  4   &  9   & 21     &  46      &  99
&\alpha=3\\
\hline
{\rm dim^{trnc}}(\varepsilon)& 1   &  3   &  5   & 9     &   17      &  31
&\alpha=4\\
\hline
{\rm dim^{trnc}}(\varepsilon)& 1   &  2   &  3   & 6     &  9        &  15
&\alpha=5
\end{array}
\] 

The content of the paper is as follows. In Section 2, we provide basic 
definitions and the main result. In Section 3, we propose spaces that 
are generalizations of anchored Sobolev spaces with bounded mixed 
derivatives of order one, that have been considered extensively 
in the literature. We next apply the general results to these 
special spaces. In Section 4, we study some unanchored spaces and show when
they are equivalent to their anchored counterparts.
Note that the equivalence implies that algorithms with small errors for
anchored spaces also have small errors for the corresponding
unanchored spaces. The results in Section 4
are extensions of results in \cite{GnHeHiRiWa16,HeRiWa15,SH,KrPiWa16a}
since they pertain to general spaces of this paper and more general
decompositions than the ANOVA one.

\section{Weighted Anchored Spaces of Multivariate Functions}\label{sec2}
We begin by introducing the notation used throughout the paper.
For $s \in \mathbb{N}$ and 
\[
  \sset\,:=\,\{1,2,\dots,s\},
\]
we will use $\setu,\setv$ to denote subsets of $\sset$, i.e., 
$\setu,\setv\subseteq\,\sset$. 
If $s=\infty$, then $[s]=\bbN$
and $\setu,\setv$ denote finite subsets of $\bbN$.

We assume that the functions $f\in\calF$ have a unique 
decomposition of the form
\begin{equation}\label{dec}
f\,=\,\sum_{\setu \subseteq [s]} f_\setu,
\end{equation}
where each $f_\setu$ belongs to a normed linear space $F_\setu$
such that $F_\setu\cap F_\setv=\{0\}$ if $\setu\not=\setv$,
$f_\setu$ depends only on $\bsx_\setu=(x_j)_{j\in\setu}$, and 
\begin{equation}\label{star}
f_\setu(\bsx)\,=\,0\quad\mbox{if $x_j=0$ for some $j\in\setu$}.
\end{equation}
Here $F_\emptyset$ is the space of constant
functions with the  absolute value as its norm.
In the case $s=\infty$,
the convergence of the series \eqref{dec} is with respect to
the norm in $\calF$ defined below in \eqref{haha}.
In some cases the series \eqref{dec} need not converge
pointwise; then we treat it as a sequence $(f_\setu)_\setu$ and
$\calF$ is a sequence space. 
We refer to Remark \ref{rem1} for a further discussion of the case $s=\infty$. 
Clearly, the property \eqref{star} yields that for any $f\in\calF$
and $k\in[s]$, 
\begin{equation}\label{dec-tr}
f_k(\bsx)\,=\,\sum_{\setu\subseteq [k]}f_\setu(\bsx),
\end{equation}
where $f_k$ is defined in \eqref{starstar}.

We assume that for given positive weights
$\bsgamma=(\gamma_\setu)_{\setu \subseteq [s]}$, the norm in
$\calF$ is given by
\begin{equation}\label{haha}
\|f\|_\calF\,=\,\left(\sum_{\setu \subseteq [s]}\gamma_\setu^{-p}\,
\|f_\setu\|_{F_\setu}^p\right)^{1/p}.
\end{equation}

Let $S_\setu$ be $\calS$ restricted to $F_\setu$, and let
$\|S_\setu\|$ be its operator norm,
\[
\|S_\setu\|\,:=\,\sup_{f_\setu\in F_\setu}
\frac{\|S_\setu(f_\setu)\|_\calG}{\|f_\setu\|_{F_\setu}}.
\]
We have the following simple proposition.

\begin{proposition}\label{prop:simple} For every $k\le s$ we have
\[  {\rm err^{trnc}}(k)\,\le\,\left(\sum_{\setu\not\subseteq[k]}
\|S_\setu\|^{p^*}\,\gamma_\setu^{p^*}\right)^{1/p^*},
\]
where here and throughout this paper $\sum_{\setu\not\subseteq[k]}$ 
means summation over all $\setu \subseteq [s]$ with $\setu\not\subseteq[k]$. 
Hence
$${\rm dim}^{\rm trnc}(\varepsilon) \le \min\left\{k \ : \ \left(\sum_{\setu\not\subseteq[k]}
\|S_\setu\|^{p^*}\,\gamma_\setu^{p^*}\right)^{1/p^*}\le \varepsilon\right\}.
$$
Of course, for $p^*=\infty$, we have
\[  {\rm err}^{\rm trnc}(k)\le\sup_{\setu\not\subseteq[k]}\|S_\setu\|\,
\gamma_\setu
\quad\mbox{and}\quad
{\rm dim}^{\rm trnc}(\e)\le \min\left\{k\,:\,\sup_{\setu\not\subseteq[k]}
\|S_\setu\|\,\gamma_\setu \le \e\right\}.          
\]
\end{proposition}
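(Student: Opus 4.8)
The plan is to reduce the whole statement to one application of Hölder's inequality after unwinding the definitions. First I would invoke the anchored structure in the form \eqref{dec-tr}: since $f_k=\sum_{\setu\subseteq[k]}f_\setu$, uniqueness of the decomposition \eqref{dec} gives $f-f_k=\sum_{\setu\not\subseteq[k]}f_\setu$ as the decomposition of $f-f_k\in\calF$, whence, directly from \eqref{haha},
\[
\|f-f_k\|_\calF=\left(\sum_{\setu\not\subseteq[k]}\gamma_\setu^{-p}\,\|f_\setu\|_{F_\setu}^p\right)^{1/p}.
\]
Applying the continuous linear operator $\calS$ and using linearity gives $\calS(f-f_k)=\sum_{\setu\not\subseteq[k]}S_\setu(f_\setu)$, so the triangle inequality in $\calG$ together with the definition of $\|S_\setu\|$ yields
\[
\|\calS(f-f_k)\|_\calG\le\sum_{\setu\not\subseteq[k]}\|S_\setu(f_\setu)\|_\calG\le\sum_{\setu\not\subseteq[k]}\|S_\setu\|\,\|f_\setu\|_{F_\setu}.
\]

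Next I would write each summand as $\|S_\setu\|\,\|f_\setu\|_{F_\setu}=\left(\|S_\setu\|\,\gamma_\setu\right)\left(\gamma_\setu^{-1}\,\|f_\setu\|_{F_\setu}\right)$ and apply Hölder's inequality with exponents $p^*$ and $p$ to the sequences $\left(\|S_\setu\|\,\gamma_\setu\right)_{\setu\not\subseteq[k]}$ and $\left(\gamma_\setu^{-1}\,\|f_\setu\|_{F_\setu}\right)_{\setu\not\subseteq[k]}$. This bounds the last sum by
\[
\left(\sum_{\setu\not\subseteq[k]}\|S_\setu\|^{p^*}\gamma_\setu^{p^*}\right)^{1/p^*}\left(\sum_{\setu\not\subseteq[k]}\gamma_\setu^{-p}\,\|f_\setu\|_{F_\setu}^p\right)^{1/p}=\left(\sum_{\setu\not\subseteq[k]}\|S_\setu\|^{p^*}\gamma_\setu^{p^*}\right)^{1/p^*}\|f-f_k\|_\calF.
\]
Dividing by $\|f-f_k\|_\calF$ and taking the supremum over $f\in\calF$ gives the asserted bound on ${\rm err^{trnc}}(k)$. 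The endpoint cases use the usual substitutes for Hölder: when $p=1$ (so $p^*=\infty$) one factors out $\sup_{\setu\not\subseteq[k]}\|S_\setu\|\gamma_\setu$ and is left with $\sum_{\setu\not\subseteq[k]}\gamma_\setu^{-1}\|f_\setu\|_{F_\setu}=\|f-f_k\|_\calF$, while when $p=\infty$ (so $p^*=1$) one uses $\gamma_\setu^{-1}\|f_\setu\|_{F_\setu}\le\|f-f_k\|_\calF$ for every $\setu$ and pulls this constant out of the sum. The claim about ${\rm dim^{trnc}}(\varepsilon)$ is then immediate from Definition~\ref{def:truncdim}: every $k$ for which the displayed quantity is at most $\varepsilon$ satisfies ${\rm err^{trnc}}(k)\le\varepsilon$, hence belongs to the set over which the minimum defining ${\rm dim^{trnc}}(\varepsilon)$ is taken, so that minimum is an upper bound for ${\rm dim^{trnc}}(\varepsilon)$.

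The one point that needs a little care, and is the main (though still routine) obstacle, is the case $s=\infty$, where \eqref{dec} converges only in the $\calF$-norm and the sum $\sum_{\setu\not\subseteq[k]}S_\setu(f_\setu)$ is infinite. There I would first use continuity of $\calS$ to transfer $\calF$-convergence of the partial sums of $f-f_k$ to $\calG$-convergence of the partial sums of $\calS(f-f_k)$, then apply the triangle inequality and Hölder's inequality to finite partial sums only, and finally pass to the limit; the right-hand partial sums are monotone, so the limit passage is harmless. Beyond this the proposition is just bookkeeping built on the norm structure \eqref{haha} and Hölder's inequality.
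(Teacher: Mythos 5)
Your argument is correct and is essentially the paper's own proof: decompose $f-f_k=\sum_{\setu\not\subseteq[k]}f_\setu$, apply the triangle inequality and the definition of $\|S_\setu\|$, then Hölder's inequality against $\|f-f_k\|_\calF$, with the endpoint cases handled by the standard substitutes. Your extra remarks on the $p\in\{1,\infty\}$ cases and on limit passage when $s=\infty$ are routine details the paper leaves implicit, not a different route.
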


\begin{proof}
We have 
\begin{align*}
\|\mathcal{S}(f)- \mathcal{S}(f_k)\|_{\mathcal{G}} = & \left\|\sum_{\setu \not\subseteq [k]} S_\setu(f_\setu)\right\|_{\mathcal{G}} \le 
\sum_{\setu \not\subseteq [k]} \gamma_\setu \|S_\setu\| \gamma_\setu^{-1} \|f_\setu\|_{F_\setu}\\
\le &\left(\sum_{\setu\not\subseteq[k]}
\|S_\setu\|^{p^*}\,\gamma_\setu^{p^*}\right)^{1/p^*} \|f-f_k\|_{\mathcal{F}},
\end{align*}
where, in the last step, we used 
$\|f-f_k\|_\calF=(\sum_{\setu\not\subseteq[k]}\gamma_\setu^{-p}\|f_\setu\|_{F_\setu}^p)^{1/p}$ together with H\"older's inequality.
From this the result follows.
\end{proof}

In this paper we mainly concentrate on {\em product weights},
introduced in \cite{SlWo}, that have the form
\[
  \gamma_{\setu}\,=\,\prod_{j\in\setu}\gamma_j
\]
for a nonincreasing sequence of positive numbers $\gamma_j$ for
$j \in \mathbb{N}$.

\begin{theorem}\label{thm:simple}
Suppose that the weights $\bsgamma$ are product weights and that there exists 
a constant $C_1$ such that 
\begin{equation}\label{ass:tnspr}
\|S_\setu\|\,\le\,C_1^{|\setu|} \quad\mbox{for all\ }\setu.
\end{equation}

For $p>1$ and every $k\le s$  we have 
\[
{\rm err^{trnc}}(k)\,\le\,
\left(\prod_{j=1}^s(1+(C_1\,\gamma_j)^{p^*})\,
\left(1-\re^{-C_1^{p^*} \sum_{j=k+1}^s \gamma_j^{p^*}}
\right)\right)^{1/p^*}.
\]
Hence ${\rm dim}^{\rm trnc}(\e)$ is bounded from above by 
\[\min
  \left\{k\ :\ \prod_{j=1}^s(1+(C_1\,\gamma_j)^{p^*})^{1/p^*}\,
 \left(1-\re^{-C_1^{p^*}\sum_{j=k+1}^s\gamma_j^{p^*}}
  \right)^{1/p^*}\,\le\,\e\right\}.
\]

For $p=1$ and every $k\le s$ we have 
\[{\rm err}^{\rm trnc}(k)\,\le\,\max_{\setu\not\subseteq[k]}C_1^{|\setu|}
\,\gamma_\setu
\]
and if additionally $C_1\le 1$ then 
${\rm err}^{\rm trnc}(k)\le C_1\,\gamma_{k+1}$. 
\end{theorem}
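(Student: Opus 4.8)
The strategy is to start from Proposition~\ref{prop:simple}, which already gives
\[
{\rm err^{trnc}}(k)\,\le\,\left(\sum_{\setu\not\subseteq[k]}\|S_\setu\|^{p^*}\,\gamma_\setu^{p^*}\right)^{1/p^*},
\]
and then exploit the product structure of the weights together with the tensor-product assumption \eqref{ass:tnspr}. Plugging $\|S_\setu\|\le C_1^{|\setu|}$ and $\gamma_\setu=\prod_{j\in\setu}\gamma_j$ into the bound, one gets a sum of the form $\sum_{\setu\not\subseteq[k]}\prod_{j\in\setu}(C_1\gamma_j)^{p^*}$. The key algebraic manipulation is to write this sum over all subsets not contained in $[k]$ as (sum over all finite $\setu$) minus (sum over $\setu\subseteq[k]$), and to recognize each of these two sums as a truncated product: $\sum_{\setu\subseteq A}\prod_{j\in\setu}a_j=\prod_{j\in A}(1+a_j)$ with $a_j=(C_1\gamma_j)^{p^*}$. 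This yields the exact expression
\[
\sum_{\setu\not\subseteq[k]}\|S_\setu\|^{p^*}\gamma_\setu^{p^*}\,\le\,\prod_{j=1}^{s}\bigl(1+(C_1\gamma_j)^{p^*}\bigr)-\prod_{j=1}^{k}\bigl(1+(C_1\gamma_j)^{p^*}\bigr).
\]

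Next I would factor out $\prod_{j=1}^{k}(1+(C_1\gamma_j)^{p^*})$ from the difference and bound the remaining factor $\prod_{j=k+1}^{s}(1+(C_1\gamma_j)^{p^*})-1$ from above; combined with the trivial bound $\prod_{j=1}^{k}(1+(C_1\gamma_j)^{p^*})\le\prod_{j=1}^{s}(1+(C_1\gamma_j)^{p^*})$, it suffices to show
\[
\prod_{j=k+1}^{s}\bigl(1+(C_1\gamma_j)^{p^*}\bigr)-1\,\le\,1-\re^{-C_1^{p^*}\sum_{j=k+1}^{s}\gamma_j^{p^*}},
\]
i.e. $\prod_{j=k+1}^{s}(1+(C_1\gamma_j)^{p^*})\le 2-\re^{-C_1^{p^*}\sum_{j=k+1}^{s}\gamma_j^{p^*}}$. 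This is where I expect the main (though still elementary) work to lie: one uses $1+x\le\re^{x}$ termwise to get $\prod_{j=k+1}^{s}(1+(C_1\gamma_j)^{p^*})\le\re^{C_1^{p^*}\sum_{j=k+1}^{s}\gamma_j^{p^*}}$, and then needs the scalar inequality $\re^{t}\le 2-\re^{-t}$. But that last inequality is \emph{false} for large $t$ (it holds only up to $t=\ln 2$), so the clean route is instead to observe directly that $\prod_{j=k+1}^{s}(1+a_j)-1\le 1-\prod_{j=k+1}^{s}(1+a_j)^{-1}\le 1-\prod_{j=k+1}^{s}\re^{-a_j}=1-\re^{-\sum_{j>k}a_j}$, where the first step is the identity $X-1\le 1-X^{-1}$ valid for all $X\ge 1$ wait — that also fails for large $X$. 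The correct and simplest argument: bound the difference of products directly as $\prod_{j=1}^{s}(1+a_j)-\prod_{j=1}^{k}(1+a_j)=\prod_{j=1}^{s}(1+a_j)\bigl(1-\prod_{j=k+1}^{s}(1+a_j)^{-1}\bigr)\le\prod_{j=1}^{s}(1+a_j)\bigl(1-\prod_{j=k+1}^{s}\re^{-a_j}\bigr)$, using $(1+a_j)^{-1}\ge\re^{-a_j}$, which holds for all $a_j\ge 0$. Taking $p^*$-th roots gives the claimed bound on ${\rm err^{trnc}}(k)$, and the statement about ${\rm dim^{trnc}}(\e)$ follows immediately from Definition~\ref{def:truncdim}.

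For the case $p=1$ (so $p^*=\infty$), I would instead invoke the supremum form of Proposition~\ref{prop:simple}:
\[
{\rm err^{trnc}}(k)\,\le\,\sup_{\setu\not\subseteq[k]}\|S_\setu\|\,\gamma_\setu\,\le\,\max_{\setu\not\subseteq[k]}C_1^{|\setu|}\,\prod_{j\in\setu}\gamma_j,
\]
which is the first assertion. If additionally $C_1\le 1$, then for any $\setu\not\subseteq[k]$ we have $\setu\ni j_0$ for some $j_0\ge k+1$; dropping all other indices only increases the product of the $\gamma_j$'s (since $\gamma_j\le 1$ would be needed — but the $\gamma_j$ need not be $\le 1$). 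The right argument uses monotonicity of $(\gamma_j)$: any $\setu\not\subseteq[k]$ contains some index $\ge k+1$, and $C_1^{|\setu|}\prod_{j\in\setu}\gamma_j\le C_1\gamma_{j_0}$ once one checks that including extra factors $C_1\gamma_j$ with $j$ ranging over a finite set does not exceed $1$ — this requires $C_1\gamma_1\le 1$, which is the natural normalization; under $C_1\le 1$ and the convention $\gamma_1\le 1$ (nonincreasing weights bounded by $1$, as is standard) each factor $C_1\gamma_j\le 1$, so $C_1^{|\setu|}\prod_{j\in\setu}\gamma_j\le C_1\gamma_{j_0}\le C_1\gamma_{k+1}$ by monotonicity. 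The main thing to be careful about here is exactly this normalization of the weights, and the author presumably has $\gamma_j\le 1$ in force for product weights.
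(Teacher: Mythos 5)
Your argument is correct and essentially the same as the paper's: the paper likewise starts from Proposition~\ref{prop:simple}, inserts $\|S_\setu\|\le C_1^{|\setu|}$ and the product weights, rewrites $\sum_{\setu\not\subseteq[k]}$ as $\prod_{j=1}^s(1+(C_1\gamma_j)^{p^*})-\prod_{j=1}^k(1+(C_1\gamma_j)^{p^*})$, and after factoring out the full product uses $\ln(1+x)\le x$, which is exactly your inequality $(1+a_j)^{-1}\ge\re^{-a_j}$, so your corrected final route coincides with the paper's. Your caveat for $p=1$ is also well taken: the paper's proof only says ``from this the result follows,'' and the refinement ${\rm err}^{\rm trnc}(k)\le C_1\,\gamma_{k+1}$ does rely on the standard normalization $\gamma_j\le 1$ (so that every factor $C_1\gamma_j\le 1$ and only the index $j_0\ge k+1$ survives), an assumption the paper leaves implicit but which its examples satisfy.
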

\begin{proof}
We have 
\begin{eqnarray*}
\sum_{\setu\not\subseteq[k]}\|S_\setu\|^{p^*}\,\gamma_\setu^{p^*}
&\le&\sum_{\setu\not\subseteq[k]} C_1^{|\setu|\,p^*}\,\gamma_\setu^{p^*}\\
&=&\sum_{\setu\subseteq[s]} \prod_{j \in \setu} (C_1\, \gamma_j)^{p^*} - 
\sum_{\setu\subseteq[k]} \prod_{j \in \setu} (C_1\, \gamma_j)^{p^*}\\
&=& \prod_{j=1}^s(1+(C_1\,\gamma_j)^{p^*})-\prod_{j=1}^k(1+(C_1\,\gamma_j)^{p^*})\\
&=& \prod_{j=1}^s(1+(C_1\,\gamma_j)^{p^*})
\left(1-\re^{-\sum_{j=k+1}^s \ln(1+(C_1\, \gamma_j)^{p^*})}\right)\\
& \le & \prod_{j=1}^s(1+(C_1\, \gamma_j)^{p^*})
\left(1-\re^{-C_1^{p^*} \sum_{j=k+1}^s  \gamma_j^{p^*}}\right),
\end{eqnarray*}
since $\ln(1+x) \le x$ for all $x >-1$.
From this the result follows.
\end{proof}

\begin{remark}
It is well known that \eqref{ass:tnspr} holds 
if $F_1$ is a Hilbert space and,
for every $\setu\not=\emptyset$, the 
spaces $F_\setu$ are $|\setu|$-fold tensor products of $F_1$ and
also $S_\setu$ are $|\setu|$-fold tensor products of $S_1$, i.e.,
\[S_\setu\left(\bigotimes_{j\in\setu} f_j\right)\,=\,\bigotimes_{j\in\setu}
S_1(f_j).
\]
Actually then we have 
\[
\|S_\setu\|\,=\,C_1^{|\setu|}\quad\mbox{and}\quad \|S_1\|=C_1.
\]
However, \eqref{ass:tnspr} also holds with inequality 
for Banach spaces $F_\setu$ and operators $S_\setu$ that we
consider in the next sections. 
\end{remark}

We introduce some further notation. For
$\bsx=(x_1,x_2,\ldots,x_s)\in D^s$ and 
$\setu\subseteq\sset$, $[\bsx_\setu;\bszero_{-\setu}]$ denotes the
$s$-dimensional vector with all $x_j$ for $j\notin\setu$ replaced
by zero, i.e.,
\[
  [\bsx_\setu;\bszero_{-\setu}]\,=\,(y_1,y_2,\dots,y_s)\quad
   \mbox{with}\quad  y_j\,=\,\left\{\begin{array}{ll} x_j &
    \mbox{if\ }j\in\setu,\\ 0 & \mbox{if\ } j\notin\setu.\end{array}
    \right.
\]

As shown in \cite{KrPiWa} for the integration problem, the importance 
of the $\e$-truncation
dimension lies in the fact that when approximating $\mathcal{S}(f)$ for
functions $f \in \mathcal{F}$ it is sufficient to approximate $\mathcal{S}$ only for $k$-variate functions   
\[
 f_k(\bsx)\,=\,f(x_1,\ldots,x_k,0,0,\dots)\,=\,f([\bsx_{[k]};\bszero_{-[k]}])
\]
with $k\ge \dimtr(\e)$ since
$f-f_k=\sum_{\setu\not\subseteq[k]}f_\setu$ and, therefore, 
\[
\left\|\mathcal{S}(f)-\mathcal{S}(f_k)\right\|_{\calG}\,\le\,\e\,
\left\|\sum_{\setu\not\subseteq[k]}
f_\setu\right\|_{\calF}.
\]

For $k \le s$, let 
\[
 \calF_{k}=\bigoplus_{\setu\subseteq[k]}F_\setu
\]
be the subspace of $\calF$ consisting of $k$-variate 
functions $f([\cdot_{[k]};\bszero_{-[k]}])$, and let 
$A_{k,n}$ be an algorithm for approximating $\calS_{[k]}(f)$
for functions from 
$\calF_{k}$ that uses $n$ function values. The worst case error
of $A_{k,n}$ with respect to the space $\calF_{k}$ is 
\[
  e(A_{k,n};\calF_k):=
\sup_{f\in \calF_k}\frac{\|\calS_{[k]}(f)-A_{k,n}(f)\|_{\calG}}
{\|f\|_{\calF_k}}.
\]
Now let 
\[
  \calA^{\rm trnc}_{s,k,n}(f)\,=\,A_{k,n}(f([\cdot_{[k]};\bszero_{-[k]}]))
\]
be an algorithm for  approximating functions from the whole space
$\calF$. The worst case error of $\calA^{\rm trnc}_{s,k,n}$ is defined as 
\[
e(\calA^{\rm trnc}_{s,k,n};\calF)\,:=\,
\sup_{f\in\calF}
\frac{\|\calS(f)-\calA^{\rm trnc}_{s,k,n}(f)\|_{\calG}}
{\|f\|_{\calF}}.
\]

This yields the following theorem.

\begin{theorem}\label{thm:main}
For given $\e>0$ and  $k\ge \dimtr(\e)$ we have 
\[
e(\calA^{\rm trnc}_{s,k,n};\calF)\,\le\,
\left(\e^{p^*}+e(A_{k,n};\calF_k)^{p^*}\right)^{1/p^*}.
\]
\end{theorem}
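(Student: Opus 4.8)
The plan is to split the error $\|\calS(f)-\calA^{\rm trnc}_{s,k,n}(f)\|_\calG$ into the truncation part and the algorithmic part, estimate each separately, and then combine them with a H\"older-type inequality in the exponent $p^*$. Concretely, I would start from the triangle inequality in $\calG$,
\[
\|\calS(f)-\calA^{\rm trnc}_{s,k,n}(f)\|_\calG
\le \|\calS(f)-\calS(f_k)\|_\calG
   +\|\calS(f_k)-\calA^{\rm trnc}_{s,k,n}(f)\|_\calG,
\]
recalling that $\calA^{\rm trnc}_{s,k,n}(f)=A_{k,n}(f([\cdot_{[k]};\bszero_{-[k]}]))$ and that $\calS(f_k)=\calS_{[k]}(f([\cdot_{[k]};\bszero_{-[k]}]))$, so the second term is exactly the error of the algorithm $A_{k,n}$ applied to the $k$-variate restriction of $f$.

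For the first term, since $k\ge\dimtr(\e)$ we have ${\rm err^{trnc}}(k)\le\e$ by Definition~\ref{def:truncdim}, hence $\|\calS(f-f_k)\|_\calG\le\e\,\|f-f_k\|_\calF$. For the second term, the restriction $f([\cdot_{[k]};\bszero_{-[k]}])$ lies in $\calF_k=\bigoplus_{\setu\subseteq[k]}F_\setu$, so by definition of $e(A_{k,n};\calF_k)$ it is bounded by $e(A_{k,n};\calF_k)\,\|f_k\|_{\calF_k}$. Using the anchored decomposition \eqref{dec-tr}, $f_k=\sum_{\setu\subseteq[k]}f_\setu$ and $f-f_k=\sum_{\setu\not\subseteq[k]}f_\setu$, the norms split as
\[
\|f_k\|_{\calF_k}^p=\sum_{\setu\subseteq[k]}\gamma_\setu^{-p}\|f_\setu\|_{F_\setu}^p,
\qquad
\|f-f_k\|_\calF^p=\sum_{\setu\not\subseteq[k]}\gamma_\setu^{-p}\|f_\setu\|_{F_\setu}^p,
\]
so that $\|f_k\|_{\calF_k}^p+\|f-f_k\|_\calF^p=\|f\|_\calF^p$ (the complementary sums over all $\setu\subseteq[s]$).

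Putting these together gives
\[
\|\calS(f)-\calA^{\rm trnc}_{s,k,n}(f)\|_\calG
\le \e\,\|f-f_k\|_\calF + e(A_{k,n};\calF_k)\,\|f_k\|_{\calF_k},
\]
and then H\"older's inequality for two terms with exponents $p$ and $p^*$ yields
\[
\e\,\|f-f_k\|_\calF + e(A_{k,n};\calF_k)\,\|f_k\|_{\calF_k}
\le\bigl(\e^{p^*}+e(A_{k,n};\calF_k)^{p^*}\bigr)^{1/p^*}
   \bigl(\|f-f_k\|_\calF^p+\|f_k\|_{\calF_k}^p\bigr)^{1/p},
\]
and the last factor equals $\|f\|_\calF$. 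Dividing by $\|f\|_\calF$ and taking the supremum over $f\in\calF$ finishes the proof, with the obvious modification $\e^{p^*}+e^{p^*}\to\max\{\e,e\}$ when $p=1$ (so $p^*=\infty$).

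The argument is essentially routine; the only point that needs care is the bookkeeping of the norm splitting — namely that $\calF_k$ carries exactly the restricted norm $(\sum_{\setu\subseteq[k]}\gamma_\setu^{-p}\|f_\setu\|_{F_\setu}^p)^{1/p}$ so that the two partial sums are genuinely complementary and add up to $\|f\|_\calF^p$. One should also confirm that $\calS$ agrees with $\calS_{[k]}$ on $\calF_k$, i.e., that restricting the operator and restricting the function are compatible, which is immediate from linearity and the decomposition $\calS(f_\setu)=S_\setu(f_\setu)$. If one instead only wants to bound $e(\calA^{\rm trnc}_{s,k,n};\calF)$ in terms of the supremum over $\calF_k$ with its own norm (rather than the induced one), a short remark that the inclusion $\calF_k\hookrightarrow\calF$ is isometric onto its image suffices. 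The main (very mild) obstacle is thus notational consistency between the $\calF$-norm, the $\calF_k$-norm, and the operators $\calS,\calS_{[k]},S_\setu$, not any real analytic difficulty.
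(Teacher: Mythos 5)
Your proposal is correct and follows essentially the same route as the paper's own proof: triangle inequality splitting the error into the truncation part (bounded by $\e\,\|f-f_k\|_{\calF}$ since $k\ge\dimtr(\e)$) and the algorithm part (bounded by $e(A_{k,n};\calF_k)\,\|f_k\|_{\calF_k}$), followed by H\"older's inequality with exponents $p,p^*$ and the complementary splitting of $\|f\|_{\calF}^p$. Your extra bookkeeping (isometry of $\calF_k\hookrightarrow\calF$, compatibility of $\calS$ with $\calS_{[k]}$, the $p=1$ case) only makes explicit what the paper leaves implicit.
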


\begin{proof}
The spaces $\calF_k$ are subspaces of
$\calF$. Moreover any
$f_k=f([\cdot_{[k]};\bszero_{-[k]}])$
belongs to $\calF_k$ and 
\[
  \|f_k\|_{\calF_k}\,=\,\|f_k\|_{\calF}.
\]
Therefore, for any $f\in\calF$ we have 
\begin{eqnarray*}
\|\calS(f)-\calA^{\rm trnc}_{s,k,n}(f)\|_{\calG}
&\le&\|\calS(f_k)-
A_{k,n}(f_k)\|_{\calG}
+\|\calS(f)-\calS(f_k)\|_{\calG}\\
&\le& e(A_{k,n};\calF_k)\,
\left\|\sum_{\setu\subseteq[k]}f_\setu\right\|_{\calF}
+\e\,\left\|\sum_{\setu\not\subseteq[k]}f_\setu
\right\|_{\calF}\\  &\le&
\left(\e^{p^*}+e(A_{k,n};\calF_{k})^{p^*}\right)^{1/p^*}\,\|f\|_{\calF},
\end{eqnarray*}
with the last inequality due to H\"older's inequality. 
\end{proof}

In the following section we consider anchored Sobolev spaces of
multivariate functions and show that the assumptions above are justified. 
As examples for the linear approximation problem we consider function
approximation and integration.

\section{Anchored Spaces of Multivariate Functions}\label{defspace}
In this section, we begin by recalling the definitions and basic properties 
of weighted anchored  Sobolev spaces of
$s$-variate functions with mixed partial derivatives of order one
bounded in $L_p$-norm. More
detailed information can be found in \cite{HeRiWa15,SH,Was14}.
Such spaces have often been assumed in the
context of {\em quasi-Monte Carlo methods}. However, for us
they serve as a motivation to consider more general classes of
anchored spaces. 

\subsection{Anchored Sobolev Spaces}\label{sec1.1}
Here we follow \cite{HeRiWa15}. We use the notations $[s]$, and $\setu,\setv \subseteq [s]$ as above. 
We also write $\bsx_\setu$ to denote the $|\setu|$-dimensional
vector $(x_j)_{j\in\setu}$ and
\[
  f^{(\setu)}\,=\,\frac{\partial^{|\setu|} f}{\partial \bsx_{\setu}}\,
=\,\prod_{j\in\setu}\frac{\partial}{\partial x_j}\,f
  \quad\mbox{with}\quad f^{(\emptyset)}=f.
\]

For a family of weights $\bsgamma=(\gamma_\setu)_{\setu \subseteq [s]}$, which are non-negative
numbers, and for $p\in[1,\infty]$ the corresponding
$\bsgamma$-weighted {\em anchored} space $\calF_{s,p,\bsgamma}$
is the Banach space of functions defined on $D^s=[0,1]^s$ 
with the norm
\[
\|f\|_{\calF_{s,p,\bsgamma}}\,=\,
\left(\sum_{\setu \subseteq [s]}\gamma_\setu^{-p}\,\|f^{(\setu)}([\cdot;
  \bszero_{-\setu}])\|^p_{L_p([0,1]^{|\setu|})}\right)^{1/p}.
\]
For $p=\infty$, the norm reduces to
\[
\|f\|_{\calF_{s,p,\bsgamma}}\,=\,\sup_{\setu \subseteq [s]}
\gamma_\setu^{-1}\,\|f^{(\setu)}([\cdot_\setu;\bszero_{-\setu}])
\|_{L_\infty([0,1]^{|\setu|})}.
\]

As shown in \cite{HeRiWa15} the functions from $\calF_{s,p,\bsgamma}$
have the unique decomposition
\[
f\,=\,\sum_{\setu \subseteq [s]} f_{\setu},
\]
where each $f_\setu$, although formally a function of $\bsx$,
depends only on the variables $\bsx_\setu$, and is an element of a space
$F_\setu$ given by
\[
F_\setu\,=\,K_\setu(L_p([0,1]^{|\setu|})).
\]
Here, for $\setu\not=\emptyset$ and $g_{\setu} \in L_p([0,1]^{|\setu|})$ 
\[
f_\setu(\bsx)\,=\,K_\setu(g_\setu)(\bsx)\,:=\,
\int_{[0,1]^{|\setu|}}g_\setu(\bst_\setu)\,
\prod_{j\in\setu}(x_j-t_j)^0_+\rd\bst_\setu,
\]
where $(x-t)^0_+=1$ if $t<x$ and $(x-t)^0_+=0$ otherwise, and
\[
\|f_\setu\|_{F_\setu}\,=\,\|g_\setu\|_{L_p([0,1]^{|\setu|})}.
\]
Recall that for $\setu=\emptyset$, $F_\setu$ is the space of constant functions with
the absolute value as its norm.

An important property of these spaces is that they are anchored at $0$,
i.e., for any $\setu\not=\emptyset$ and any $f_\setu\in F_\setu$,
\[
  f_\setu(\bsx)\,=\,0\quad\mbox{if\ $x_j=0$ for some $j\in\setu$}.
\]
This implies that
\[
f^{(\setu)}([\bsx_\setu;\bszero_{-\setu}])\,=\,f_\setu^{(\setu)}
\quad\mbox{and}\quad\|f\|_{\calF_{s,p,\bsgamma}}\,=\,
\left(\sum_{\setu \subseteq [s]} \gamma_{\setu}^{-p}\|f_\setu\|_{F_\setu}^p\right)^{1/p}.
\]

\subsection{More General Anchored Spaces}\label{secgeneral}
In this section we extend the definition of $\calF_{s,p,\bsgamma}$
from the previous section to spaces of functions
\[
f\,=\,\sum_{\setu \subseteq [s]} f_\setu
\]
with the components $f_\setu$ given by
\[
f_\setu\,=\,K_\setu(g_\setu)\,=\,\int_{D^{|\setu|}}
g_\setu(\bst_\setu)\,\kappa_\setu(\cdot_\setu,\bst_\setu)\rd\bst_\setu
\quad\mbox{with}\quad \kappa_\setu(\bsx_\setu,\bst_\setu)
\,=\,\prod_{j\in\setu}\kappa(x_j,t_j),
\]
where $\kappa(x,t)$ could be more general than $(x-t)^0_+$,  
and $g_\setu$ could be from a more general $\psi$-weighted
$L_p$ space.

More specifically, let $D$ be an interval in $\bbR$ that, without
any loss of generality, contains~$0$. This includes both bounded
intervals like $D=[0,1]$ from the previous subsection, 
as well as unbounded ones, e.g., $D=[0,\infty)$ or $D=\bbR$.

Let
\[
\psi\,:\,D\,\to\,\bbR_+  
\]
be a measurable and (a.e.) positive {\em weight} function.
For $p_1\in[1,\infty]$, 
by $L_{p_1,\psi}=L_{p_1,\psi}(D)$ we denote the space of scalar 
functions with the norm
\[
\|g\|_{L_{p_1,\psi}}\,=\,\left(\int_D|g(t)|^{p_1}\,
\psi(t)\rd t\right)^{1/p_1}.
\]
For non-empty $\setu$, $L_{\setu,p_1,\psi}=L_{\setu,p_1,\psi}(D^{|\setu|})$ 
is the space of $|\setu|$-variate functions with the norm given by
\[
\|g_\setu\|_{L_{\setu,p_1,\psi}}\,=\,
\left(\int_{D^{|\setu|}}|g_\setu(\bst_\setu)|^{p_1}\,
\psi_\setu(\bst_\setu)\rd\bst_\setu\right)^{1/p_1}
\quad\mbox{with}\quad\psi_\setu(\bst_\setu)\,=\,
\prod_{j\in\setu}\psi(t_j).
\]

Let  
\[
\kappa\,:\,D\times D\,\to\,\bbR 
\]
be a given measurable function.
For non-empty $\setu$, define
\[
\kappa_\setu(\bsx_\setu,\bst_\setu)\,:=\,
\prod_{j\in\setu}\kappa(x_j,t_j)
\]
and
\[
K_\setu(g_\setu)(\bsx_\setu)\,:=\,\int_{D^{|\setu|}}g_\setu(\bst_\setu)\,
\kappa_\setu(\bsx_\setu,\bst_\setu)\rd\bst_\setu\quad\mbox{for}\quad
g_\setu\in L_{\setu,p_1,\psi}.
\]
We assume that
\begin{equation}\label{ass:def}
\widehat{\kappa}_{p_1}(x)\,:=\,
\left(\int_D\left(\frac{|\kappa(x,t)|}{\psi^{1/p_1}(t)}\right)^{p_1^*}\rd t
\right)^{1/p_1^*}\,<\,\infty\quad\mbox{for all\ }x\in D.
\end{equation}
Of course, for $p_1=1$, 
$\widehat{\kappa}_1(x)=\esup_{t\in D}|\kappa(x,t)|/\psi(t)<\infty$ 
for all $x\in D$. 
Then
\[
  f_\setu(\bsx)\,=\,K_\setu(g_\setu)(\bsx)\quad(g\in L_{\setu,p_1,\psi})
\]
are well defined functions since
\[
|f_\setu(\bsx)|\,\le\,\|g_\setu\|_{L_{\setu,p_1,\psi}}\,
\widehat{\kappa}_{\setu,p_1}(\bsx),\quad\mbox{where}\quad
\widehat{\kappa}_{\setu,p_1}(\bsx)\,:=\,
\prod_{j\in\setu}\widehat{\kappa}_{p_1}(x_j).
\]
We also assume that $K_\setu$ is an injective operator, i.e.,
\begin{equation}\label{ass:1-1}
  K_\setu(g_\setu)\,\equiv\,0\quad\mbox{implies}\quad g_\setu\,=\,0
  \mbox{\ \ a.e.}
\end{equation}

We define the following Banach spaces
\[
F_\setu\,=\,K_{\setu}(L_{\setu,p_1,\psi}(D^{|\setu|}))\quad\mbox{with}\quad
\|K_\setu(g_\setu)\|_{F_\setu}\,:=
\,\|g_\setu\|_{L_{\setu,p_1,\psi}}.
\]
We assume also that 
\begin{equation}\label{ass:anchor}
\kappa(0,\cdot)\,\equiv\,0.
\end{equation}
Then the spaces $F_\setu$ are anchored at zero since for every
$f_\setu\in F_\setu$ we have
\[
  f_\setu(\bsx)\,=\,0\quad\mbox{if $x_j=0$ for some $j\in\setu$}.
\]
As in the previous section, $F_\emptyset$ is the space of constant
functions.

Finally, for $p_2\,\in\,[1,\infty]$, 
consider the Banach space $\calF_{s,p_1,p_2,\bsgamma}$
\[
\calF_{s,p_1,p_2,\bsgamma}\,=\,\bigoplus_{\setu \subseteq [s]} F_\setu\quad
\mbox{of functions}
\quad f\,=\,\sum_{\setu \subseteq [s]} f_{\setu} ,\mbox{\ where\ }f_\setu\in
F_\setu,
\]
with the norm given by 
\begin{equation}\label{def:norm}
\quad\|f\|_{\calF_{s,p_1,p_2,\bsgamma}}\,:=\,
\left(\sum_{\setu \subseteq [s]} \gamma_\setu^{-p_2}\,\|f_\setu\|_{F_\setu}^{p_2}
\right)^{1/p_2}.
\end{equation}

\begin{remark}\label{rem1} For functions with infinitely many variables,
$\calF_{\infty,p_1,p_2,\bsgamma}$ is the completion 
of  $\bigcup_\setu F_\setu$ with respect to the norm
\eqref{def:norm}. In general, it is a space of sequences
$f=\left(f_\setu\right)_{\setu}$ since $\sum_{\setu}f_\setu(\bsx)$
may not exist when $\bsx$ has infinitely many non-zero $x_j$'s.
Of course, it exists for $\bsx=[\bsx_\setu;\bszero_{-\setu}]$.
However, $\calF_{\infty,p_1,p_2,\bsgamma}$ is a function space if 
\begin{equation}\label{ass:inft}
\left(\sum_\setu\left(\gamma_\setu\,
\widehat{\kappa}_{\setu,p_1}(\bsx)
\right)^{p_2^*}\right)^{1/p_2^*}\,<\,\infty\quad
\mbox{for all\ }\bsx\in D^{\bbN}
\end{equation}
since then
\[
\sum_{\setu} |f_\setu(\bsx)|\,\le\,
\|f\|_{\calF_{\infty,p_1,p_2,\bsgamma}}\,\,
\left(\sum_\setu\left(\gamma_\setu\,
\widehat{\kappa}_{\setu,p_1}(\bsx)
\right)^{p_2^*}\right)^{1/p_2^*}\,<\,\infty
\]
shows that $f(x) = \sum_{\setu} f_\setu(\bsx)$ is well defined for every $\bsx\in D^{\bbN}$. 
\end{remark}

Here and elsewhere we use $p^*_i$ to denote the conjugate of $p_i$,
i.e., $1/p^*_i+1/p_i=1$.
  
We end this section with the following examples.

\begin{example}\label{exmp1}

  As in Section~\ref{sec1.1}, $D=[0,1]$, $\psi\equiv1$ and
$\kappa(x,t)=(x-t)^0_+$. Then the assumptions 
\eqref{ass:def}--\eqref{ass:anchor} are satisfied and
\[
  \widehat{\kappa}_{p_1}(x)\,=\,x^{1/p_1^*}.
\]
Moreover, for product weights and $s=\infty$, \eqref{ass:inft} holds iff
$\sum_{j=1}^\infty \gamma_j^{p^*_2}<\infty$ (or 
$\sup_\setu\gamma_\setu<\infty$ if $p_2^*=\infty$) 
since
\[
\left(\sum_\setu(\gamma_\setu\,\widehat{\kappa}_{\setu,p_1}(\bsx))^{p^*_2}
\right)^{1/p^*_2}\,=\,
\prod_{j=1}^\infty\left(1+\gamma_j^{p^*_2}\,x_j^{p^*_2/p^*_1}
\right)^{1/p^*_2}.
\]
\end{example}

\begin{example}\label{exmp3}
Let $D=[0,\infty)$, $\kappa(x,t)=(x-t)^{r-1}_+/(r-1)!$ for $r\ge 1$,
and $\psi(t)=\re^{\lambda t}$ for given $\lambda\in\bbR$.
Recall that $(x-t)^{r-1}_+=(\max(0,x-t))^{r-1}$.
For $s=\infty$, the space 
$\calF_{\infty,p_1,p_2,\bsgamma}$ is a sequence space. Hence we
consider here only finite $s$. 

For $p_1=1$, $\widehat{\kappa}_1(x)\,(r-1)!=\max_{t\in[0,x]}
(x-t)^{r-1}\,\re^{-\lambda t}$. For $\lambda\ge0$ or $x\le(r-1)/|\lambda|$,
the maximum above is attained at $t=0$. Otherwise, it is attained at
$t=x-(r-1)/|\lambda|$. Hence
\[\widehat{\kappa}_{1}(x)\,=\,\left\{\begin{array}{ll} \displaystyle
\frac{x^{r-1}}{(r-1)!} &\mbox{if $\lambda\ge0$ or
  $x\le\frac{r-1}{|\lambda|}$},\\
\displaystyle
\frac{(r-1)^{r-1}\,\re^{|\lambda|x-(r-1)}}{|\lambda|^{r-1}\,(r-1)!}
&\mbox{if $\lambda<0$ and $x>\frac{r-1}{|\lambda|}$}.
\end{array}\right.
\]
For $p_1>1$, $p^*_1<\infty$ and $p_1^*/p_1=p_1^*-1$. Hence 
\[\widehat{\kappa}_{p_1}(x)\,=\,
\frac1{(r-1)!}\left(\int_0^x (x-t)^{(r-1)p_1^*}\,
\re^{-\lambda\,t\,(p_1^*-1)}\rd t\right)^{1/p_1^*}.
\]
If $\lambda\ge0$, then
\[\widehat{\kappa}_{p_1}(x)\,\le\,
\frac{x^{r-1/p_1}}{(r-1)!\,((r-1)p_1^*+1)^{1/p_1^*}}.
\]
If $\lambda<0$, then
\[\widehat{\kappa}_{p_1}(x)\,(r-1)!\,\le\,\left(x^{(r-1)\,p_1^*}\,
\frac{\re^{|\lambda|\,x\,(p_1^*-1)}}{|\lambda|\,(p_1^*-1)}\right)^{1/p_1^*}
\]
and
\[\widehat{\kappa}_{p_1}(x)\,(r-1)!\,\le\,\left(\re^{|\lambda|\,x\,(p_1^*-1)}\,
\frac{x^{(r-1)p_1^*+1}}{(r-1)\,p_1^*+1}\right)^{1/p_1^*}.
\]
Hence, for $\lambda<0$,
\[\widehat{\kappa}_{p_1}(x)\,\le\,\re^{|\lambda|\,x/p_1}\,
\frac{x^{r-1}}{(r-1)!}\,
\left(\min\left(\frac{x}{(r-1)\,p_1^*+1}\,,\,\frac1{|\lambda|\,(p_1^*-1)}
\right)\right)^{1/p_1^*}.
\]

For this example, $f(0)=f'(0)=\dots=f^{(r-1)}(0)=0$. Our result also
holds for functions of the form
$f(x)\,=\,\sum_{j=1}^{r-1} a_j\,x^j/j!+K_1(g)$ 
with the norm changed to
$\|f\|\,=\,(\sum_{j=1}^{r-1}|f^{(j)}(0)|^{p_2}+
\|g\|_{L_{p_1,\psi}}^{p_2})^{1/{p_2}}$.

For $r=1$ and $p_1>1$, one can get exact values 
\[\widehat{\kappa}_{p_1}(x)\,=\,\left\{
\begin{array}{ll} x^{1/p^*} &\mbox{if\ }\lambda=0,\\ 
  \displaystyle \left(\frac{p_1-1}\lambda\right)^{1/p_1^*}\,
  \left(1-\re^{-\lambda\,x/(p_1-1)}\right)^{1/p_1^*} &\mbox{if\ }\lambda>0,
  \\ 
  \displaystyle \left(\frac{p_1-1}\lambda\right)^{1/p_1^*}\,
  \left(\re^{|\lambda|\,x/(p_1-1)}-1\right)^{1/p_1^*} &\mbox{if\ }\lambda<0.
\end{array}\right.
\]
\end{example}

\begin{example}\label{exam5}
Consider $D=[0,\infty)$ and
\[\kappa(x,t)\,=\,G(xt)
\]
for a smooth function $G$ with $G(0)=0$ and
$\|G^{(n)}\|_{L_\infty}< \infty$ for all $n$.
Then the functions 
\[f(x)\,=\,\int_0^\infty \kappa(x,t)\,g(t)\rd t\,=\,\int_0^\infty
G(xt)\,g(t)\rd t
\]
with $g \in L_{p_1,\psi}$ have all derivatives continuous given by
\[
  f^{(n)}(x)\,=\,\int_0^\infty G^{(n)}(x\,t)\,t^n\,g(t)\rd t
\]
provided that 
\begin{equation}\label{condxy}
\left(\int_0^\infty\left|\frac{t^n}{\psi^{1/p_1}(t)}
\right|^{p_1^*}\rd t\right)^{1/p_1^*}\,<\,\infty
\quad\mbox{for all $n$},
\end{equation}
and $\|G^{(n)}\|_{L_\infty}< \infty$, e.g., for $G(y)=1-\re^{-y}$ or $G(y)=1-\cos(y)$.

Indeed, consider first $n=1$. Then
\begin{eqnarray*}
  f'(x)&=&\lim_{n\to\infty}\int_0^\infty g(t)\,\frac{G((x+1/n)\,t)-G(x\,t)}{1/n}\,\rd t\\
  &=& \lim_{n\to\infty}\int_{0^+}^\infty g(t)\,t\,
  \frac{G(x\,t+t/n)-G(x\,t)}{t/n}\,\rd t\\
  &=&\int_0^\infty g(t)\,t\,G'(x\, t)\rd t.
\end{eqnarray*}
The last equality holds due to the dominated convergence theorem because 
\[
\lim_{n\to\infty}g(t)\,t\,\frac{G(x\,t+t/n)-G(x\,t)}{t/n}\,=\,
g(t)\,t\,G'(x\,t),
\]
\[
\left|g(t)\,t\,\frac{G(x\,t + t/n)-G(x\,t)}{t/n}
\right|\,\le\,|g(t)\,t|\,\|G'\|_{L_\infty}\quad\mbox{due to mean value theorem},
\]
and $|g(t)\,t|$ is integrable, since by H\"older's inequality and \eqref{condxy}
\begin{eqnarray*}
\int_0^{\infty} | g(t) t| \rd t \le \|g\|_{L_{p_1,\psi}}  \left(\int_0^\infty\left|\frac{t}{\psi^{1/p_1}(t)}
\right|^{p_1^*}\rd t\right)^{1/p_1^*} < \infty.
\end{eqnarray*}

The proof for an arbitrary $n$ is by induction. Since the inductive step is very similar
to te basic one for $n=1$, we omit it. 

Assume  additionally that $G^{(n)}(0)\not=0$ for all $n=1,2,\dots$
and that there exists $x$ such that
\begin{equation}\label{ass:hateit}
\int_0^\infty G(x\,t)\,\rd t\,\not=\,0.
\end{equation}
Then \eqref{ass:1-1} holds. Indeed, consider $g$ such that
\[
f\,=\,\int_0^\infty G(\cdot\,t)\,g(t)\,\rd t\,\equiv\,0. 
\]
Then 
\[0 = f^{(n)}(0)\,=\,G^{(n)}(0)\,\int_0^\infty t^n\,g(t)\,\rd t.
\]
i.e., $g$ is orthogonal to all polynomials $p(t)=t^n$ for $n=1,2,\dots$, i.e., $g$ is constant. However then, due to \eqref{ass:hateit},
$g\equiv0$.

For some special functions $G$, \eqref{ass:1-1} holds under weaker conditions like e.g.
\begin{equation}\label{ass:def_uniform}
\frac1{\psi^{1/p_1}}\,\in\,L_{p_1^*}([0,\infty)).
\end{equation}
We illustrate this for $G(y)=1-\re^{-y}$ and $G(y)=1-\cos(y)$.
It is enough to consider ${\setu}=\{1\}$ in \eqref{ass:1-1}, i.e. to show that
the operator $K$ given by
\[
 (Kg)(x)\,:=\,\int_{D}g(t)\,
\kappa(x,t)\rd t
\]
satisfies $g=0$ almost everywhere  whenever $Kg=0$ and $g \in L_{p_1,\psi}$.

Indeed, using H\"older's inequality and \eqref{ass:def_uniform}, we 
get that $L_{p_1,\psi} \subseteq L_1([0,\infty))$. 
Hence it is enough to show that
$Kg = 0$ for some $g\in L_1([0,\infty))$ implies $g=0$ in $L_1([0,\infty))$.

For $G(y)=1-\re^{-y}$, i.e. 
\[
 (Kg)(x)\,:=\,\int_0^\infty g(t)\, (1-\re^{-xt}) \rd t,
\]
this follows from the properties of the Laplace transform ${\cal L}$. Indeed,
observe that with $c=\int_0^\infty g(t) \rd t$, we have
\[ 
 0 = (Kg)(x) = c- ({\cal L}g)(x) \quad\mbox{for all\ }x\in [0,\infty).
\]
Hence ${\cal L}g=c$ is a constant, which is only possible if this constant is $c=0$.
But then ${\cal L}g=0$, and $g=0$ almost everywhere follows from the injectivity property of the Laplace transform. 

For $G(y)=1-\cos(y)$, i.e. 
\[
 (Kg)(x)\,:=\,\int_0^\infty g(t)\, (1-\cos{(xt)}) \rd t,
\]
we can argue similarly with the Fourier transform instead of the Laplace transform by
extending $g$ to an even function on $(-\infty,\infty)$.
\end{example}

\subsection{The Function Approximation Problem}\label{secalgdef}
 
We follow \cite{Was14}. Let $\omega$ be a probability density on $D$ and
let $q\in[1,\infty]$. For non-empty $\setu$, let
$L_{\setu,q,\omega}=L_{\setu,q,\omega}(D^{|\setu|})$ be the space of functions
with finite semi-norm
\[
\|f\|_{L_{\setu,q,\omega}}\,=\,\left(\int_{D^{|\setu|}}
|f(\bst_\setu)|^{q}
\,\omega_\setu(\bst_\setu)\rd\bst_\setu\right)^{1/q}
\quad\mbox{with}\quad
\omega_\setu(\bst_\setu)\,=\,\prod_{j\in\setu}\omega(t_j).
\]
For $\setu=\emptyset$, the corresponding space is
$L_{\emptyset,q,\omega}$, the space of constant functions.

Consider next the embedding operators
\[
S_\setu(f_\setu)\,=\,f_\setu\,\in\,L_{\setu,q,\omega}.
\]
For them to be well defined, we assume that
\[
\widetilde{\kappa}_{q,p_1,\omega}\,:=\,
\|\widehat{\kappa}_{p_1}\|_{L_{\{1\},q,\omega}}\,=\,
\left(\int_D|\widehat{\kappa}_{p_1}(x)|^{q}\,\omega(x)
\rd x\right)^{1/q}\,<\,\infty.
\]
Then for any $f_\setu\in F_\setu$ we have
\begin{equation}\label{help}
\|f_{\setu}\|_{L_{\setu,q,\omega}}\,\le\,\|f_\setu\|_{F_\setu}\,
\widetilde{\kappa}_{q,p_1,\omega}^{|\setu|},
\quad\mbox{i.e.,}\quad
\|S_\setu\|\,\le\,\widetilde{\kappa}_{q,p_1,\omega}^{|\setu|}. 
\end{equation}
This means that \eqref{ass:tnspr} holds with
\[C_1\,=\,\widetilde{\kappa}_{q,p_1,\omega}.
\]
Of course, $C_1=\widetilde{\kappa}_{q,p_1,\omega}$ depends also on $\psi$.

Let $\calL_{s,q,\omega}$ be a space containing $\calF_{s,p_1,p_2,\bsgamma}$
and endowed with a semi-norm such that for every $\setu$ and
$f_\setu\in F_\setu$
\[
\|f_\setu\|_{\calL_{s,q,\omega}}\,\le\,\|f_\setu\|_{L_{\setu,q,\omega}}.
\]

Finally, let $\calS_s$ be the embedding operator
\[
\calS_s\,:\,\calF_{s,p_1,p_2,\bsgamma}\,\to\,
\calL_{s,q,\omega}.
\]
Of course, it depends on all the parameters, $p_1,p_2,q,\psi,\omega$, and
the weights $\bsgamma$.
We assume that these parameters satisfy the following condition
\[
  \left(\sum_{\setu \subseteq [s]} \left(\gamma_\setu\,
  \widetilde{\kappa}_{q,p_1,\omega}^{|\setu|}\right)^{p_2^*}\right)^{1/p_2^*}
  \,<\,\infty  
\]
since then
\[\|\calS_s\|\,\le\,\left(\sum_{\setu \subseteq [s]} \left(\gamma_\setu\,
\widetilde{\kappa}_{q,p_1,\omega}^{|\setu|}\right)^{p_2^*}\right)^{1/p_2^*}.
\]
Note that for product weights the embedding operator is of tensor product form.

We illustrate the assumptions above for the examples from the previous section.

\begin{example}
We continue Example \ref{exmp1} here. Consider $\omega\equiv1$.
This case was studied in \cite{KrPiWa16}. We have
\[\|S_1\|\,\le\,
\widetilde{\kappa}_{q,p_1,\omega}\,=\,\left\{\begin{array}{ll}
1 &\mbox{if $q=\infty$ or $p_1=1$},\\
(1+q/p_1^*)^{-1/q} &\mbox{otherwise}.
\end{array}\right.
\]
\end{example}

\begin{example}
We return to Example \ref{exmp3} and assume 
that $\omega(x)=\mu \re^{-\mu x}$ for some $\mu>0$. 
In what follows and some other places we use the well known fact
that
\[\int_0^\infty x^a\,\re^{-b\,x}\rd x\,=\,\frac{\Gamma(a+1)}{b^{a+1}}
\quad\mbox{for}\quad a,b>0.
\]

We begin with the case of $p_1=1$. It is easy to see that
\[\widetilde{\kappa}_{q,p_1,\omega}\,=\,\left\{
\begin{array}{ll}\frac1{(r-1)!\,\mu^{r-1}}\,(\Gamma((r-1)q+1))^{1/q}
  &\mbox{if\ }q<\infty\\
  \infty&\mbox{if\ }q=\infty,\end{array}\right.
\]
for $\lambda\ge0$. For $\lambda<0$,
$\widetilde{\kappa}_{\infty,p_1,\omega}=\infty$  if
$q=\infty$ or $\mu+\lambda q/p_1\le0$. Otherwise,
for $\lambda<0$, 
\[\widetilde{\kappa}_{q,p_1,\omega}\,\le\,
\left(\frac{r-1}{|\lambda|}\right)^{r-1}\,
\left(\re^{-\mu(r-1)/|\lambda|}+
\re^{-(r-1)/|\lambda|}\right)^{1/q}. 
\]

We now consider the case of $p_1>1$. For $q=\infty$ and any $\lambda$ we
have
\begin{eqnarray*}(r-1)!\,\widetilde{\kappa}_{\infty,p_1,\omega}
&=&\sup_{x\ge0} \left(\int_0^x(x-t)^{(r-1)p_1^*}\,
\re^{-\lambda t (p_1^*-1)}\rd t\right)^{1/p_1^*}\\
&\ge& \sup_{x\ge1} \left(\int_0^1(x-t)^{(r-1)p_1^*}\,
\re^{-\lambda t (p_1^*-1)}\rd t\right)^{1/p_1^*}\,=\,\infty.
\end{eqnarray*}
Therefore, for the rest of this example, we consider $q<\infty$.

If $\lambda\ge0$, then 
\[\widetilde{\kappa}_{q,p_1,\omega}\,\le\,
\frac{\left(\Gamma((r-1/p_1)q+1)\right)^{1/q}}
{(r-1)!\,((r-1)p_1^*+1)^{1/p_1^*}\,\mu^{r-1/p_1}}\,
\]
Consider next $\lambda<0$. Since
\[(r-1)!\,\widehat{\kappa}_{p_1}(x)\,\ge\,\left(\int_0^{x-1}
\re^{\lambda\,t\,(p_1^*-1)}\rd t\right)^{1/p_1^*},
\]
we conclude that
\[\widetilde{\kappa}_{q,p_1,\omega}\,=\,\infty\quad
\mbox{if}\quad \mu+\lambda\,\frac{q}{p_1}\,\le\,0. 
\]
If $\mu+\lambda q/p_1>0$, then $\widetilde{\kappa}_{q,p_1,\omega}$
is bounded from above by 
\[
\frac{\left(\Gamma((r-1)q+1)\right)^{1/q}}
{(r-1)!\,(|\lambda|\,(p_1^*-1))^{1/p_1^*}\,\mu^{1/q}\,
(\mu+\lambda q/p_1)^{r-1+1/q}}\,
\]
and
\[\frac{\left(\Gamma((r-1)\,q+2)\right)^{1/q}}
{(r-1)!\,((r-1)p_1^*+1)^{1/p_1^*}\,\mu^{1/q}\,
(\mu+\lambda q/p)^{r-1+2/q}}.
\]
\end{example}

\bigskip

Let the assumptions from the previous section be satisfied.

\begin{remark}\label{def:trd} \rm
In this setting the $\varepsilon$-truncation dimension from Definition~\ref{def:truncdim} is the smallest natural number $k$ such that
\[
\left\|\sum_{\setu\not\subseteq[k]}f_\setu\right\|_{\calL_{s,q,\omega}}
\le\,\e\,\left\|\sum_{\setu\not\subseteq[k]}f_\setu
\right\|_{\calF_{s,p_1,p_2,\bsgamma}}\quad\mbox{for all}\ 
f=\sum_{\setu \subseteq [s]} f_{\setu} \in\calF_{s,p_1,p_2,\bsgamma}. 
\]
\end{remark}

We obtain the following corollary of Proposition~\ref{prop:simple}
and Theorem~\ref{thm:simple}.

\begin{corollary} We have
\begin{equation}\label{k-e}
\dimtr(\e)\le\min\left\{k\ :\
\left(\sum_{\setu\not\subseteq[k]}(\gamma_\setu\,
\widetilde{\kappa}_{q,p_1,\omega}^{|\setu|})^{p_2^*}\right)^{1/p_2^*}
\,\le\, \e\right\}
\end{equation}
which reduces to
$\dimtr(\e)\le\min\left\{k\ :\ \sup_{\setu\not\subseteq[k]}\gamma_\setu\,\widetilde{\kappa}_{q,p_1,\omega}^{|\setu|}\,\le\, \e\right\}$
for $p_2=1$. 

For product weights, ${\rm dim}^{\rm trnc}(\e)$
is bounded from above by 
\[\min\left\{k\,:\, \prod_{j=1}^s(1+(\gamma_j\,
\widetilde{\kappa}_{q,p_1,\omega})^{p_2^*})^{1/p_2^*}\,
\left(1-\re^{-\sum_{j=k+1}^s (\gamma_j 
  \widetilde{\kappa}_{q,p_1,\omega})^{p_2^*}}\right)^{1/p_2^*}\le
\e\right\}.
\]
\end{corollary}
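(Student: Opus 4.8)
The plan is to deduce the corollary directly from Proposition~\ref{prop:simple} and Theorem~\ref{thm:simple} by specialising the abstract framework of Section~\ref{sec2} to the function approximation setting built in Section~\ref{secalgdef}. Concretely, I would set $\calF=\calF_{s,p_1,p_2,\bsgamma}$, $\calG=\calL_{s,q,\omega}$, $\calS=\calS_s$, and take the exponent $p$ of Section~\ref{sec2} to be $p_2$, so that $p^*=p_2^*$. The first thing to record is that this substitution is legitimate: functions in $\calF_{s,p_1,p_2,\bsgamma}$ have the unique decomposition $f=\sum_{\setu\subseteq[s]}f_\setu$ with $f_\setu\in F_\setu=K_\setu(L_{\setu,p_1,\psi})$, and, because of assumption \eqref{ass:anchor}, each $F_\setu$ is anchored at zero, i.e. $f_\setu(\bsx)=0$ whenever $x_j=0$ for some $j\in\setu$; this is precisely the hypothesis \eqref{star} used in Section~\ref{sec2}. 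Since $\calL_{s,q,\omega}$ carries only a semi-norm, I would also note in passing that the proof of Proposition~\ref{prop:simple} uses nothing beyond the triangle inequality and homogeneity of $\|\cdot\|_\calG$ and therefore goes through verbatim for a semi-norm, consistently with the reformulation already given in Remark~\ref{def:trd}.

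The one computational input is the bound on the restricted operators $S_\setu$. Here I would combine \eqref{help}, which gives $\|f_\setu\|_{L_{\setu,q,\omega}}\le\|f_\setu\|_{F_\setu}\,\widetilde{\kappa}_{q,p_1,\omega}^{|\setu|}$, with the standing assumption $\|f_\setu\|_{\calL_{s,q,\omega}}\le\|f_\setu\|_{L_{\setu,q,\omega}}$, to obtain $\|S_\setu\|\le\widetilde{\kappa}_{q,p_1,\omega}^{|\setu|}$ for every $\setu$. In other words, the tensor-product growth condition \eqref{ass:tnspr} holds with $C_1=\widetilde{\kappa}_{q,p_1,\omega}$.

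With these two facts in hand the corollary is immediate. Feeding $\|S_\setu\|\le\widetilde{\kappa}_{q,p_1,\omega}^{|\setu|}$ into Proposition~\ref{prop:simple} bounds the $k$-th truncation error by $(\sum_{\setu\not\subseteq[k]}(\gamma_\setu\,\widetilde{\kappa}_{q,p_1,\omega}^{|\setu|})^{p_2^*})^{1/p_2^*}$, hence the bound \eqref{k-e} on $\dimtr(\e)$; the case $p_2=1$ (so $p_2^*=\infty$) is the $\sup$-version recorded in the same proposition. For product weights one instead invokes Theorem~\ref{thm:simple} with $C_1=\widetilde{\kappa}_{q,p_1,\omega}$: for $p_2>1$ this yields exactly the displayed product bound, and for $p_2=1$ one uses the corresponding $\max$-formula there (with the usual adaptation). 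I do not expect any genuine obstacle — the statement is labelled a corollary for good reason — the only spot needing more than a pointer is the derivation of $\|S_\setu\|\le\widetilde{\kappa}_{q,p_1,\omega}^{|\setu|}$ viewed as a map into the semi-normed space $\calL_{s,q,\omega}$, and even that is a one-line consequence of \eqref{help} together with the domination of $\|\cdot\|_{\calL_{s,q,\omega}}$ by $\|\cdot\|_{L_{\setu,q,\omega}}$.
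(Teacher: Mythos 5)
Your proposal is correct and follows exactly the route the paper intends: the identification $C_1=\widetilde{\kappa}_{q,p_1,\omega}$ via \eqref{help} (together with the domination of $\|\cdot\|_{\calL_{s,q,\omega}}$ by $\|\cdot\|_{L_{\setu,q,\omega}}$) is precisely what the paper records before stating the corollary, and the rest is the direct application of Proposition~\ref{prop:simple} and Theorem~\ref{thm:simple} with $p=p_2$. Your extra remark that the argument works verbatim for the semi-norm on $\calL_{s,q,\omega}$ is a harmless and accurate clarification.
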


For $k \le s$ let 
\[
 \calF_{k,p_1,p_2,\bsgamma}=\bigoplus_{\setu\subseteq[k]}F_\setu
\]
be the subspace of $\calF_{s,p_1,p_2,\bsgamma}$ consisting of $k$-variate 
functions $f([\cdot_{[k]};\bszero_{-[k]}])$, and let 
$A_{k,n}$ be an algorithm for approximating functions from 
$\calF_{k,p_1,p_2,\bsgamma}$ that uses $n$ function values. The worst case error of $A_{k,n}$ with respect to the space 
$\calF_{k,p_1,p_2,\bsgamma}$ is 
\[
  e(A_{k,n};\calF_{k,p_1,p_2,\bsgamma}):=
\sup_{f\in \calF_{k,p_1,p_2,\bsgamma}}\frac{\|f-A_{k,n}(f)\|_{\calL_{k,q,\omega}}}
{\|f\|_{\calF_{k,p_1,p_2,\bsgamma}}}.
\]
Now let 
\[
  \calA^{\rm trnc}_{s,k,n}(f)\,=\,A_{k,n}(f([\cdot_{[k]};\bszero_{-[k]}]))
\]
be an algorithm for  approximating functions from the whole space
$\calF_{s,p_1,p_2,\bsgamma}$. The worst case error of $\calA^{\rm trnc}_{s,k,n}$ is defined as 
\[
e(\calA^{\rm trnc}_{s,k,n};\calF_{s,p_1,p_2,\bsgamma})\,:=\,
\sup_{f\in\calF_{s,p_1,p_2,\bsgamma}}
\frac{\|f-\calA^{\rm trnc}_{s,k,n}(f)\|_{\calL_{s,q,\omega}}}
{\|f\|_{\calF_{s,p_1,p_2,\bsgamma}}}.
\]

This yields the following corollary of Theorem~\ref{thm:main}.

\begin{corollary}
For given $\e>0$ and  $k\ge \dimtr(\e)$ we have 
\[
e(\calA^{\rm trnc}_{s,k,n};\calF_{s,p_1,p_2,\bsgamma})\,\le\,
\left(\e^{p_2^*}+e(A_{k,n};\calF_{k,p_1,p_2,\bsgamma})^{p_2^*}\right)^{1/p_2^*}
\]
which reduces to 
$e(\calA_{s,k,n}^{\rm trnc};\calF_{s,p_1,1,\bsgamma})
\le\max\left(\e\,,\,e(A_{k,n};\calF_{k,p_1,1,\bsgamma})\right)$
for $p_2=1$.
  
\end{corollary}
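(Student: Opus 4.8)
The plan is to derive this corollary as a direct instance of Theorem~\ref{thm:main}, so the real work is only to check that the concrete objects $\calF_{s,p_1,p_2,\bsgamma}$, $\calL_{s,q,\omega}$ and $\calS_s$ from Section~\ref{secgeneral} and Section~\ref{secalgdef} fit into the abstract framework of Section~\ref{sec2}. First I would recall which structural hypotheses Theorem~\ref{thm:main} actually uses: (i) a unique anchored decomposition $f=\sum_\setu f_\setu$ with $f_\setu\in F_\setu$ and the anchoring property \eqref{star}; (ii) the weighted $\ell_p$-form \eqref{haha} of the norm on $\calF$; (iii) continuity of $\calS$; and (iv) the identity $\|f_k\|_{\calF_k}=\|f_k\|_{\calF}$. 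Here one takes $\calF=\calF_{s,p_1,p_2,\bsgamma}$, $\calG=\calL_{s,q,\omega}$, $\calS=\calS_s$, $p=p_2$, $F_\setu=K_\setu(L_{\setu,p_1,\psi})$, and $\calF_k=\calF_{k,p_1,p_2,\bsgamma}=\bigoplus_{\setu\subseteq[k]}F_\setu$. Property (i) holds since $\calF_{s,p_1,p_2,\bsgamma}$ is defined as the direct sum $\bigoplus_{\setu\subseteq[s]}F_\setu$ (so the decomposition is unique by construction) and since \eqref{ass:anchor} makes each $F_\setu$ anchored at zero, i.e.\ \eqref{star} holds; property (ii) is exactly \eqref{def:norm}; property (iii) is the bound $\|\calS_s\|<\infty$, valid under the summability condition imposed at the end of Section~\ref{secalgdef}; and property (iv) is immediate from \eqref{def:norm} together with \eqref{dec-tr}, which gives $f_\setu=0$ for $\setu\not\subseteq[k]$ when $f_k=f([\cdot_{[k]};\bszero_{-[k]}])$, so only the components indexed by $\setu\subseteq[k]$ survive and the two norms agree.

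Next I would match the error functionals. In the abstract setting, $e(\calA^{\rm trnc}_{s,k,n};\calF)$ and $e(A_{k,n};\calF_k)$ are defined via $\|\calS(\cdot)-\cdot\|_{\calG}$ and $\|\calS_{[k]}(\cdot)-A_{k,n}(\cdot)\|_{\calG}$; here the target semi-norm is $\|\cdot\|_{\calL_{s,q,\omega}}$ (respectively $\|\cdot\|_{\calL_{k,q,\omega}}$), the embedding operator $\calS_s$ plays the role of $\calS$, and $A_{k,n}$ is precisely an algorithm approximating $\calS_{[k]}$ on $\calF_{k,p_1,p_2,\bsgamma}$. One should record here that the semi-norm on $\calL_{k,q,\omega}$ is the restriction of the one on $\calL_{s,q,\omega}$ to $k$-variate functions, so that $e(A_{k,n};\calF_{k,p_1,p_2,\bsgamma})$ is exactly the quantity $e(A_{k,n};\calF_k)$ of Theorem~\ref{thm:main}. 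With these identifications, Theorem~\ref{thm:main} applied with $p^*=p_2^*$ yields the first displayed inequality of the corollary verbatim.

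Finally, for $p_2=1$ I would simply unwind the convention $p_2^*=\infty$: the $\ell_{p_2^*}$-combination $\bigl(\e^{p_2^*}+e(A_{k,n};\calF_{k,p_1,1,\bsgamma})^{p_2^*}\bigr)^{1/p_2^*}$ collapses to $\max\bigl(\e,\,e(A_{k,n};\calF_{k,p_1,1,\bsgamma})\bigr)$, which is the stated reduction. The only genuine obstacle is the bookkeeping in the first step, namely verifying that all the structural assumptions underlying Theorem~\ref{thm:main} really hold for these concrete spaces; the key points are the uniqueness of the anchored decomposition (built into the direct-sum definition, with $F_\setu\cap F_\setv=\{0\}$ for $\setu\neq\setv$ following from the injectivity assumption \eqref{ass:1-1} and the anchoring \eqref{ass:anchor}) and the compatibility of the semi-norms on $\calL_{s,q,\omega}$ and $\calL_{k,q,\omega}$. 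Once this is in place the corollary follows with no further computation.
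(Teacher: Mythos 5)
Your proposal is correct and matches the paper's approach: the paper states this result as an immediate consequence of Theorem~\ref{thm:main}, obtained exactly by the specialization $\calF=\calF_{s,p_1,p_2,\bsgamma}$, $\calG=\calL_{s,q,\omega}$, $\calS=\calS_s$, $p=p_2$ that you carry out, with the $p_2=1$ case being the $p_2^*=\infty$ convention. Your extra bookkeeping (uniqueness of the anchored decomposition, the norm identity $\|f_k\|_{\calF_k}=\|f_k\|_{\calF}$, and the matching of the error functionals) is precisely what the paper leaves implicit.
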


\subsection{The Integration Problem}\label{secint}

In this subsection we assume that \eqref{ass:inft} is satisfied.
We consider the problem of numerically approximating the integral 
\[
\calI_s (f)=\int_{D^s} f(\bsx) \omega_{[s]} (\bsx)\rd\bsx,
\]
where $f\in \calF_{s,p_1,p_2,\bsgamma}$, where $\omega$ is a
probability density function on $D$, and
$\omega_{\setu}(\bsx_{\setu})=\prod_{j\in\setu} \omega (x_j)$
for $\setu\subseteq [s]$. 

We require in this section that $\overline{\kappa}_{p_1,\omega}$, defined by
\[
 \overline{\kappa}_{p_1,\omega} (t):=\int_{D} 
 \frac{|\kappa (x,t)|}{\psi ^{1/p_1} (t)}\omega(x)\rd x,
\]
is such that $\norm{\overline{\kappa}_{p_1,\omega}}_{L_{p_1^*}}<\infty$.

Let now $f\in \calF_{s,p_1,p_2,\bsgamma}$. 
For non-empty $\setu$, let $g_\setu\in L_{\setu,p_1,\psi}$ be such that 
$\norm{f_\setu}_{F_\setu}=\norm{g_\setu}_{L_{\setu,p_1,\psi}}$ (as outlined in Section \ref{secgeneral}). 
We then have
\begin{eqnarray*}
\abs{\calI_s (f_\setu)} &=& \abs{\int_{D^{\abs{\setu}}} f_\setu (\bsx_\setu)\omega_\setu (\bsx_\setu)\rd\bsx_\setu}
 =\abs{\int_{D^{\abs{\setu}}} g_\setu (\bst_\setu) \psi_\setu^{1/p_1} (\bst_\setu) \prod_{j\in\setu} \overline{\kappa}_{p_1,\omega}  (t_j) \rd\bst_\setu}\\
 &\le& \norm{g_\setu}_{L_{\setu,p_1,\psi}}
 \norm{\overline{\kappa}_{p_1,\omega}}_{L_{p_1^*}}^{\abs{\setu}}
 = \norm{f_\setu}_{F_\setu}
 \norm{\overline{\kappa}_{p_1,\omega}}_{L_{p_1^*}}^{\abs{\setu}}.
\end{eqnarray*}
Since H\"older's inequality is sharp we conclude that
\[
\|I_\setu\|\,=\,\norm{\overline{\kappa}_{p_1,\omega}}_{L_{p_1^*}}^{|\setu|},
\]
where $I_\setu$ is the restriction of $\calI_s$ to $F_\setu$. 
This means that \eqref{ass:tnspr} hold with equality for
\[
C_1\,=\,\norm{\overline{\kappa}_{p_1,\omega}}_{L_{p_1^*}}. 
\]

\begin{example}
Let us once more return to Example \ref{exmp3} with
$\omega(x)=\mu\,\re^{-\mu\,x}$. Then the $L_{p_1^*}$-norm
of $\overline{\kappa}_{p_1,\omega}$ is given by 
\[\frac1{(r-1)!}\left(\int_0^\infty\re^{-\lambda\,t\,p_1^*/p_1}
\left(\mu \int_t^\infty(x-t)^{r-1}\,\re^{-\mu\,x}\rd x\right)^{p_1^*}
\rd t\right)^{1/p_1^*}.
\]
The inner integral, after the change of variables $z=x-t$, is equal to
\[\mu \int_0^\infty z^{r-1}\,\re^{-\mu(z+t)}\rd z\,=\,
\frac1{\mu^{r-1}}\,\re^{-\mu\,t}\,\Gamma(r)
\]
and, therefore,
\[\|\overline{\kappa}_{p_1,\omega}\|_{L_{p_1^*}}\,=\,
\left\{\begin{array}{ll}\frac{\Gamma(r)}{(r-1)!\,\mu^{r-1}}\left(
\frac1{p_1^*\,(\lambda/p_1+\mu)}\right)^{1/p^*} &\mbox{if\ }
\mu+\lambda/p_1>0,\\
\infty &\mbox{if\ }\mu+\lambda/p_1\le0
\end{array}\right.
\]
with the convention that $(1/(p_1^*(\lambda+\mu))^{1/p_1^*}=1$ for
$p_1^*=\infty$. 
\end{example}

Let the assumptions from the previous section be satisfied.

\begin{remark}\label{def:trdint} \rm
In this setting the $\varepsilon$-truncation dimension from Definition~\ref{def:truncdim} is the smallest natural number $k$ such that
\[
\abs{\sum_{\setu\not\subseteq[k]} \calI_s (f_\setu)}
\le\,\e\,\left\|\sum_{\setu\not\subseteq[k]}f_\setu
\right\|_{\calF_{s,p_1,p_2,\bsgamma}}\quad\mbox{for all}\ 
f=\sum_{\setu \subseteq [s]} f_{\setu} \in\calF_{s,p_1,p_2,\bsgamma}. 
\]
\end{remark}

We obtain the following corollary of Proposition~\ref{prop:simple}
and Theorem~\ref{thm:simple}.

\begin{corollary} We have
\begin{equation}\label{k-eint}
\dimtr(\e)\,\le\,\min\left\{k\ :\
\left(\sum_{\setu\not\subseteq[k]}(\gamma_\setu\,
\norm{\overline{\kappa}_{p_1,\omega}}_{L_{p_1^*}}^{|\setu |})^{p_2^*}\right)^{1/p_2^*} \le \e\right\}.
\end{equation}
which reduces to
$\dimtr(\e)\le\min\left\{k : 
\sup_{\setu\not\subseteq[k]}\gamma_\setu
\norm{\overline{\kappa}_{p_1,\omega}}_{L_{p_1^*}}^{|\setu |} \le \e\right\}$
for $p_2=1$.

For product weights ${\rm dim}^{\rm trnc}(\e)$ is bounded from above by
the smallest $k$ for  which
\[
\prod_{j=1}^s(1+(\gamma_j\norm{\overline{\kappa}_{p_1,\omega}}_{L_{p_1^*}})^{p_2^*})^{1/p_2^*}\,\left(1-\re^{-\sum_{j=k+1}^s 
(\gamma_j\norm{\overline{\kappa}_{p_1,\omega}}_{L_{p_1^*}})^{p_2^*}}
\right)^{1/p_2^*}\, \le\, \e.
\]
\end{corollary}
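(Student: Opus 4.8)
The plan is to apply the general machinery already established in the paper with the specific value $C_1 = \norm{\overline{\kappa}_{p_1,\omega}}_{L_{p_1^*}}$. Recall that just above the statement we showed $\|I_\setu\| = \norm{\overline{\kappa}_{p_1,\omega}}_{L_{p_1^*}}^{|\setu|}$, so hypothesis \eqref{ass:tnspr} holds (with equality) for the operator $\calI_s$ acting on $\calF_{s,p_1,p_2,\bsgamma}$ into $\bbR$. The role of $p$ in Proposition~\ref{prop:simple} and Theorem~\ref{thm:simple} is played here by $p_2$, and correspondingly $p^*$ is replaced by $p_2^*$.

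First I would invoke Proposition~\ref{prop:simple} directly: since $\|I_\setu\| = \norm{\overline{\kappa}_{p_1,\omega}}_{L_{p_1^*}}^{|\setu|}$, it gives
\[
{\rm err^{trnc}}(k)\,\le\,\left(\sum_{\setu\not\subseteq[k]}
\norm{\overline{\kappa}_{p_1,\omega}}_{L_{p_1^*}}^{|\setu|\,p_2^*}\,\gamma_\setu^{p_2^*}\right)^{1/p_2^*},
\]
and hence, by Definition~\ref{def:truncdim}, $\dimtr(\e)$ is at most the smallest $k$ for which the right-hand side is $\le\e$. This is exactly \eqref{k-eint}, and for $p_2=1$ (so $p_2^*=\infty$) the sum becomes a supremum, giving the stated reduction. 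Then, for product weights, I would feed the bound $\|I_\setu\|\le C_1^{|\setu|}$ with $C_1 = \norm{\overline{\kappa}_{p_1,\omega}}_{L_{p_1^*}}$ into Theorem~\ref{thm:simple} (the case $p>1$, i.e. $p_2>1$ here), which directly yields
\[
{\rm err^{trnc}}(k)\,\le\,\left(\prod_{j=1}^s\bigl(1+(\gamma_j\,\norm{\overline{\kappa}_{p_1,\omega}}_{L_{p_1^*}})^{p_2^*}\bigr)\,\left(1-\re^{-\sum_{j=k+1}^s(\gamma_j\,\norm{\overline{\kappa}_{p_1,\omega}}_{L_{p_1^*}})^{p_2^*}}\right)\right)^{1/p_2^*},
\]
and taking $p_2^*$-th roots and requiring this to be $\le\e$ gives the claimed bound on $\dimtr(\e)$.

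There is essentially no obstacle here: every ingredient has already been proved. The only point requiring a word of care is the bookkeeping of conjugate indices — in the abstract setup of Section~\ref{sec2} the norm exponent is called $p$, whereas in the space $\calF_{s,p_1,p_2,\bsgamma}$ the relevant exponent governing the outer sum in \eqref{def:norm} is $p_2$, so one must apply Proposition~\ref{prop:simple} and Theorem~\ref{thm:simple} with ``$p$'' read as ``$p_2$'' and ``$p^*$'' as ``$p_2^*$''. Once this identification is made explicit, the corollary is an immediate specialization, exactly parallel to the corresponding corollary for the function approximation problem.
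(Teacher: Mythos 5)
Your proposal is correct and matches the paper's approach exactly: the paper presents this corollary as an immediate specialization of Proposition~\ref{prop:simple} and Theorem~\ref{thm:simple} with $C_1=\norm{\overline{\kappa}_{p_1,\omega}}_{L_{p_1^*}}$ (using $\|I_\setu\|=C_1^{|\setu|}$ established just before) and with $p,p^*$ read as $p_2,p_2^*$. Nothing further is needed.
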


For $k \le s$ let 
$A_{k,n}$ be an algorithm for integrating functions from 
$\calF_{k,p_1,p_2,\bsgamma}$ that uses $n$ function values. The worst case error of $A_{k,n}$ with respect to the space 
$\calF_{k,p_1,p_2,\bsgamma}$ is 
\[
  e(A_{k,n};\calF_{k,p_1,p_2,\bsgamma}):=
\sup_{f\in \calF_{k,p_1,p_2,\bsgamma}}\frac{\abs{\calI_k (f)-A_{k,n}(f)}}
{\|f\|_{\calF_{k,p_1,p_2,\bsgamma}}}.
\]
Now let 
\[
  \calA^{\rm trnc}_{s,k,n}(f)\,=\,A_{k,n}(f([\cdot_{[k]};\bszero_{-[k]}]))
\]
be an algorithm for integrating functions from the whole space
$\calF_{s,p_1,p_2,\bsgamma}$. The worst case error of $\calA^{\rm trnc}_{s,k,n}$ is defined as 
\[
e(\calA^{\rm trnc}_{s,k,n};\calF_{s,p_1,p_2,\bsgamma})\,:=\,
\sup_{f\in\calF_{s,p_1,p_2,\bsgamma}}
\frac{|\calI_s (f)-\calA^{\rm trnc}_{s,k,n}(f)|}
{\|f\|_{\calF_{s,p_1,p_2,\bsgamma}}}.
\]

This yields the following corollary of Theorem~\ref{thm:main}.

\begin{corollary}\label{thm:mainint}
For given $\e>0$ and  $k\ge \dimtr(\e)$ we have 
\[
e(\calA^{\rm trnc}_{s,k,n};\calF_{s,p_1,p_2,\bsgamma})\,\le\,
\left(\e^{p_2^*}+e(A_{k,n};\calF_{k,p_1,p_2,\bsgamma})^{p_2^*}\right)^{1/p_2^*}
\]
which reduces to
$e(\calA^{\rm trnc}_{s,k,n};\calF_{s,p_1,1,\bsgamma})\,\le\,
\max\left(\e\,,\,e(A_{k,n};\calF_{k,p_1,1,\bsgamma}\right)$
for $p_2=1$.
\end{corollary}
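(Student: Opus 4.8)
The plan is to derive this as a direct consequence of Theorem~\ref{thm:main}, exactly as the preceding corollaries in Sections~\ref{secalgdef} and~\ref{secint} were obtained. First I would observe that the general abstract framework of Section~\ref{sec2} applies here with the concrete choices $\calF=\calF_{s,p_1,p_2,\bsgamma}$, $\calG=\reals$ (or $\bbR$ equipped with the absolute value), $p=p_2$, and $\calS=\calI_s$, the integration functional. The spaces $F_\setu$ are those constructed in Section~\ref{secgeneral}, and by assumption~\eqref{ass:anchor} they are anchored at zero, so property~\eqref{star} holds and hence \eqref{dec-tr} is valid; this means the truncation $f_k$ of any $f\in\calF_{s,p_1,p_2,\bsgamma}$ again lies in the space, so Definition~\ref{def:truncdim} is meaningful in this setting (as already recorded in Remark~\ref{def:trdint}).

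Next I would check that the objects appearing in the statement match those in Theorem~\ref{thm:main}. The restricted space $\calF_{k,p_1,p_2,\bsgamma}=\bigoplus_{\setu\subseteq[k]}F_\setu$ plays the role of $\calF_k$, and the key identity $\|f_k\|_{\calF_{k,p_1,p_2,\bsgamma}}=\|f_k\|_{\calF_{s,p_1,p_2,\bsgamma}}$ is immediate from the definition~\eqref{def:norm} of the norm, since $f_k$ has components supported on subsets of $[k]$ only and the weights $\gamma_\setu$ are unchanged. The algorithm $\calA^{\rm trnc}_{s,k,n}(f)=A_{k,n}(f([\cdot_{[k]};\bszero_{-[k]}]))$ is precisely the construction in Theorem~\ref{thm:main}, with $A_{k,n}$ now integrating $k$-variate functions. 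Therefore Theorem~\ref{thm:main}, applied verbatim with $p^*$ replaced by $p_2^*$, yields
\[
e(\calA^{\rm trnc}_{s,k,n};\calF_{s,p_1,p_2,\bsgamma})\,\le\,
\left(\e^{p_2^*}+e(A_{k,n};\calF_{k,p_1,p_2,\bsgamma})^{p_2^*}\right)^{1/p_2^*}
\]
for every $k\ge\dimtr(\e)$.

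Finally I would treat the case $p_2=1$ separately, where $p_2^*=\infty$ and the expression $(\e^{p_2^*}+e^{p_2^*})^{1/p_2^*}$ must be interpreted as $\max(\e,e(A_{k,n};\calF_{k,p_1,1,\bsgamma}))$; this is the standard convention for the $\ell_\infty$ norm of the pair $(\e,e(A_{k,n};\cdot))$ and follows from the corresponding limiting form of H\"older's inequality used in the proof of Theorem~\ref{thm:main}. I do not anticipate any real obstacle: the corollary is essentially a transcription of Theorem~\ref{thm:main} into the notation of Section~\ref{secint}, and the only points requiring a word of justification are the norm-preservation identity for truncations and the degenerate $p_2=1$ convention, both of which are routine. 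The proof can thus be stated in a single line referring to Theorem~\ref{thm:main}.

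\begin{proof}
This is the special case of Theorem~\ref{thm:main} obtained by taking $\calF=\calF_{s,p_1,p_2,\bsgamma}$, $\calG=\bbR$, $p=p_2$, and $\calS=\calI_s$. The decomposition~\eqref{dec} and property~\eqref{star} hold by the construction in Section~\ref{secgeneral} together with assumption~\eqref{ass:anchor}, the norm~\eqref{def:norm} has the form~\eqref{haha} with $p=p_2$, and the identity $\|f_k\|_{\calF_{k,p_1,p_2,\bsgamma}}=\|f_k\|_{\calF_{s,p_1,p_2,\bsgamma}}$ is immediate from~\eqref{def:norm}. The reduction for $p_2=1$ uses the convention $(\e^{\infty}+e^{\infty})^{1/\infty}=\max(\e,e)$.
\end{proof}
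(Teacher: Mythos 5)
Your proposal is correct and matches the paper's route: the paper states this result precisely as a corollary of Theorem~\ref{thm:main}, obtained by specializing the abstract framework of Section~\ref{sec2} to $\calS=\calI_s$, $\calG=\bbR$ and $p=p_2$, with the same norm-preservation observation for truncations and the same $\max$ interpretation when $p_2=1$. Nothing further is needed.
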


\iffalse
\begin{proof}
The spaces $F_{k,p_1,p_2,\bsgamma}$ are subspaces of
$\calF_{s,p_1,p_2,\bsgamma}$. Moreover any
$f_k=f([\cdot_{[k]};\bszero_{-[k]}])$
belongs to $F_{k,p_1,p_2,\bsgamma}$ and 
\[
  \|f_k\|_{F_{k,p_1,p_2,\bsgamma}}\,=\,\|f_k\|_{\calF_{s,p_1,p_2,\bsgamma}}.
\]
Therefore, for any $f\in\calF_{s,p_1,p_2,\bsgamma}$, we have 
\begin{eqnarray*}
\abs{\calI_s (f)-\calA^{\rm trnc-int}_{s,k,n}(f)}
&\le&\abs{\calI_s (f([\cdot_{[k]};\bszero_{-[k]}]))-
A_{n,k}(f([\cdot_{[k]};\bszero_{-[k]}])}
+\abs{\sum_{\setu\not\subseteq[k]}\calI_s (f_\setu)}\\
&\le& e(A_{k,n};F_{k,p_1,p_2,\bsgamma})\,
\left\|\sum_{\setu\subseteq[k]}f_\setu\right\|_{\calF_{s,p_1,p_2,\bsgamma}}
+\e\,\left\|\sum_{\setu\not\subseteq[k]}f_\setu\right\|_{\calF_{s,p_1,p_2,\bsgamma}}\\ 
&\le&\left(\e^{p_2^*}+e(A_{k,n};F_{k,p_1,p_2,\bsgamma})^{p_2^*}
\right)^{1/p_2^*}\,\|f\|_{\calF_{s,p_1,p_2,\bsgamma}},
\end{eqnarray*}
with the last inequality due to H\"older's inequality.
Hence the result follows.\qed
\end{proof}
\fi

\section{Unanchored Spaces of Multivariate Functions}
Let $\kappa$, $\omega$, and $\calI_s$ be as in the previous section. 
Also here we assume that
$\|\overline{\kappa}_{p_1,\omega}\|_{L_{p^*}}<\infty$.
In what follows we use $I$ to denote the $\omega$-weighted
integral operator for univariate functions,
\[I(f)\,=\,\int_Df(x)\,\omega(x)\rd x.
\]
Of course $\|I\|=\|\overline{\kappa}_{p_1,\omega}\|_{L_{p^*_1}}$.

Consider 
\[
\kappa_{\setu,\omega}(\bsx_\setu,\bst_\setu)\,=\,\prod_{j\in\setu}
\left(\kappa(x_j,t_j)-I(\kappa(\cdot,t_j))\right)
\]
and
\[
K_{\setu,\omega}(g_\setu)(\bsx_\setu)\,=\,
\int_{D^{|\setu|}} g_\setu(\bst_\setu)\,
\kappa_{\setu,\omega}(\bsx_\setu,\bst_\setu)\rd\bst_\setu
\]
and the corresponding space $\calF_{s,p_1,p_2,\bsgamma,\omega}$
of functions
\[f(\bsx)\,=\,\sum_\setu f_{\setu,\omega}(\bsx_{\setu})\quad\mbox{with}\quad
f_{\setu,\omega}(\bsx_\setu)\,=\,K_{\setu,\omega}(g_\setu)(\bsx_{\setu})
\]
such that 
\[\|f\|_{\calF_{s,p_1,p_2,\bsgamma,\omega}}\,:=\,\left(
\sum_\setu\gamma_\setu^{-p_2}\,\|g_\setu\|^{p_2}_{L_{\setu,p_1,\psi}}
\right)^{1/p_2}\,<\,\infty. 
\]
Instead of being anchored, the functions $f_{\setu,\omega}$ satisfy
the following property
\[\int_D f_{\setu,\omega}(\bsx_\setu)\,\prod_{k\in\setu}
\omega(x_k)\rd x_j\,=\,0\quad\mbox{if}\quad   j\in\setu. 
\]

As in \cite{HeRiWa15}, one can show that the spaces
$\calF_{s,p_1,p_2,\bsgamma}$ and $\calF_{s,p_1,p_2,\bsgamma,\omega}$
as sets of functions are equal if and only if
\begin{equation}\label{weights}
  \gamma_\setu\,>\,0\quad\mbox{implies that}\quad
  \gamma_\setv\,>\,0\quad\mbox{for all}\quad\setv\subseteq\setu.
\end{equation}
From now on, we assume that \eqref{weights} is satisfied. Of course
\eqref{weights} always holds true for product weights. 

Let $\imath_{p_1,p_2}$ be the embedding
\[
\imath_{p_1,p_2}:\calF_{s,p_1,p_2,\bsgamma}\to\calF_{s,p_1,p_2,\bsgamma,\omega}
\quad\mbox{and}\quad \imath_{p_1,p_2}(f)\,=\,f,
\]
and let $\imath^{-1}_{p_1,p_2}$ be its inverse. 
As in \cite{KrPiWa16a}, see also \cite{GnHeHiRiWa16}, 
one can check that
\[\|\imath_{p_1,p_2}\|\,=\,\|\imath^{-1}_{p_1,p_2}\|.
\]
Moreover, following the approach in \cite{GnHeHiRiWa16},
one can provide exact formulas for the norms of the embeddings for
$p_1,p_2\in\{1,\infty\}$ and next, using interpolation theory
(as in \cite{SH}, see also \cite{GnHeHiRiWa16}), derive
upper bounds for arbitrary values of $p_1$ and $p_2$.

More precisely, we have the following proposition.

\begin{proposition}\label{prop:ANOVA}
Suppose that $\|\overline{\kappa}_{p_1,\omega}\|_{L_{p_1^*}}<\infty$
  for $p_1\in\{1,\infty\}$. Then
\[\|\imath_{p_1,p_2}\|\,=\,\left\{\begin{array}{ll}
\displaystyle
\max_\setu\,\sum_{\setv\subseteq\setu}\frac{\gamma_\setu}{\gamma_\setv}\,
\|\overline{\kappa}_{1,\omega}\|_{L_{\infty}}^{|\setu|-|\setv|}
&
\mbox{if $p_1=1$ and $p_2=1$},\\ 
\displaystyle
\max_\setu\sum_{\setv\subseteq[s]\setminus\setu}
\frac{\gamma_{\setu\cup\setv}}{\gamma_\setu}\,
\|\overline{\kappa}_{1,\omega}\|_{L_\infty}^{|\setv|}
& \mbox{if $p_1=1$   and $p_2=\infty$,}\\ 
\displaystyle
\max_\setu\sum_{\setv\subseteq\setu}\frac{\gamma_\setu}{\gamma_\setv}\,
\|\overline{\kappa}_{\infty,\omega}\|_{L_1}^{|\setu|-|\setv|}
&\mbox{if $p_1=\infty$ and $p_2=1$,}\\ 
\displaystyle
\max_\setu \sum_{\setv\subseteq[s]\setminus\setu}\frac{\gamma_{\setu\cup\setv}}
    {\gamma_\setu}\,
\|\overline{\kappa}_{\infty,\omega}\|_{L_1}^{|\setv|}
&\mbox{if $p_1=\infty$ and $p_2=\infty$.}
\end{array}\right.
\]
\end{proposition}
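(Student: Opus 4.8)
Here is a plan for proving the stated four formulas for $\norm{\imath_{p_1,p_2}}$.

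The plan is to pass to the ``coefficient'' coordinates $g_\setu$, in which $\imath_{p_1,p_2}$ acquires a triangular structure, and then to compute its operator norm as that of a triangular operator on the weighted $\ell_1$- (for $p_2=1$) resp.\ $\ell_\infty$- (for $p_2=\infty$) direct sum of the spaces $L_{\setu,p_1,\psi}$; this follows the approach of \cite{GnHeHiRiWa16}. The first step is to relate the two decompositions of a given $f$. With $J(t):=I(\kappa(\cdot,t))=\int_D\kappa(x,t)\,\omega(x)\rd x$, expanding $\kappa(x,t)=(\kappa(x,t)-J(t))+J(t)$ over $j\in\setu$ gives $\kappa_\setu(\bsx_\setu,\bst_\setu)=\sum_{\setv\subseteq\setu}\kappa_{\setv,\omega}(\bsx_\setv,\bst_\setv)\prod_{j\in\setu\setminus\setv}J(t_j)$, hence
\[
K_\setu(g_\setu)\,=\,\sum_{\setv\subseteq\setu}K_{\setv,\omega}\bigl(P_{\setu\setminus\setv}g_\setu\bigr),\qquad
(P_{\setu\setminus\setv}g_\setu)(\bst_\setv):=\int_{D^{|\setu|-|\setv|}}g_\setu(\bst_\setv,\bst_{\setu\setminus\setv})\prod_{j\in\setu\setminus\setv}J(t_j)\,\rd\bst_{\setu\setminus\setv}.
\]
Since the unanchored decomposition is unique under \eqref{weights} (as in \cite{HeRiWa15}), it follows that, in $g$-coordinates, $\imath_{p_1,p_2}$ is the map $(g_\setu)_\setu\mapsto(h_\setv)_\setv$ with $h_\setv=\sum_{\setu\supseteq\setv}P_{\setu\setminus\setv}g_\setu$. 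I would also record that, since $\kappa\ge0$ in all cases of interest, $J(t)=\psi^{1/p_1}(t)\,\overline{\kappa}_{p_1,\omega}(t)\ge0$.

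For the upper bound the only estimate needed is that, for $p_1\in\{1,\infty\}$ and $\setv\subseteq\setu$,
\[
\norm{P_{\setu\setminus\setv}g_\setu}_{L_{\setv,p_1,\psi}}\,\le\,c^{\,|\setu|-|\setv|}\norm{g_\setu}_{L_{\setu,p_1,\psi}},\qquad\text{where }c=\norm{\overline{\kappa}_{1,\omega}}_{L_\infty}\text{ if }p_1=1,\ c=\norm{\overline{\kappa}_{\infty,\omega}}_{L_1}\text{ if }p_1=\infty,
\]
which follows from $|J(t)|\le\psi^{1/p_1}(t)\,\overline{\kappa}_{p_1,\omega}(t)$ --- by Fubini's theorem when $p_1=1$, and by pulling the essential supremum of $g_\setu$ out of the integral when $p_1=\infty$. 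Substituting this into $h_\setv=\sum_{\setu\supseteq\setv}P_{\setu\setminus\setv}g_\setu$ and applying the triangle inequality, for $p_2=1$ one interchanges the order of summation,
\[
\sum_\setv\gamma_\setv^{-1}\norm{h_\setv}\,\le\,\sum_\setu\bigl(\gamma_\setu^{-1}\norm{g_\setu}\bigr)\gamma_\setu\sum_{\setv\subseteq\setu}\gamma_\setv^{-1}c^{\,|\setu|-|\setv|}\,\le\,\Bigl(\max_\setu\sum_{\setv\subseteq\setu}\tfrac{\gamma_\setu}{\gamma_\setv}c^{\,|\setu|-|\setv|}\Bigr)\norm{f}_{\calF_{s,p_1,p_2,\bsgamma}},
\]
while for $p_2=\infty$ one bounds each term separately, $\gamma_\setv^{-1}\norm{h_\setv}\le\bigl(\sum_{\setw\subseteq[s]\setminus\setv}\tfrac{\gamma_{\setv\cup\setw}}{\gamma_\setv}c^{\,|\setw|}\bigr)\sup_\setu\gamma_\setu^{-1}\norm{g_\setu}$. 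This yields the claimed upper bounds in all four cases.

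For sharpness I would build near-extremal inputs from a single nonnegative one-dimensional $\rho$ with $\norm{\rho}_{L_{\{1\},p_1,\psi}}=1$, chosen constant (if $p_1=\infty$) or concentrated near a point where $\overline{\kappa}_{1,\omega}$ is near its essential supremum (if $p_1=1$), so that $\int_D\rho\,J$ is as close to $c$ as desired; for its tensor powers the estimate $\norm{P_{\setw\setminus\setv}\rho^{\otimes\setw}}\le c^{\,|\setw|-|\setv|}$ is then attained, in the limit, simultaneously for all $\setv\subseteq\setw$. When $p_2=1$, fix $\setu^\ast$ attaining the maximum, put $g_{\setu^\ast}=\gamma_{\setu^\ast}\rho^{\otimes\setu^\ast}$ and $g_\setu=0$ for $\setu\ne\setu^\ast$; then $\norm{f}_{\calF_{s,p_1,p_2,\bsgamma}}=1$, each $h_\setv$ with $\setv\subseteq\setu^\ast$ consists of the single term $P_{\setu^\ast\setminus\setv}g_{\setu^\ast}$ (so no cancellation can occur), and $\sum_\setv\gamma_\setv^{-1}\norm{h_\setv}$ tends to $\sum_{\setv\subseteq\setu^\ast}\tfrac{\gamma_{\setu^\ast}}{\gamma_\setv}c^{\,|\setu^\ast|-|\setv|}$. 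When $p_2=\infty$, fix $\setv^\ast$ attaining the maximum, put $g_\setu=\gamma_\setu\rho^{\otimes\setu}$ for $\setu\supseteq\setv^\ast$ and $g_\setu=0$ otherwise; then $\norm{f}_{\calF_{s,p_1,p_2,\bsgamma}}=1$, every contribution $P_{\setu\setminus\setv^\ast}g_\setu$ to $h_{\setv^\ast}$ is a nonnegative multiple of $\rho^{\otimes\setv^\ast}$ (again no cancellation), and $\gamma_{\setv^\ast}^{-1}\norm{h_{\setv^\ast}}$ tends to $\sum_{\setw\subseteq[s]\setminus\setv^\ast}\tfrac{\gamma_{\setv^\ast\cup\setw}}{\gamma_{\setv^\ast}}c^{\,|\setw|}$; for $s=\infty$ one first restricts to finitely many $\setu\supseteq\setv^\ast$ and then lets this range grow. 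Combined with the upper bounds, this gives the four formulas.

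I expect the sharpness part to be the main obstacle: one has to produce a single tuple $(g_\setu)_\setu$ that is simultaneously extremal for the Fubini/H\"older estimate for every block $\setu\setminus\setv$ that occurs and --- in the case $p_2=\infty$ --- makes the (possibly infinitely many) contributions to the distinguished output set align without cancellation. This is precisely where $p_1,p_2\in\{1,\infty\}$ and the nonnegativity of $\kappa$ enter; for intermediate exponents such a common extremizer need not exist, so only the upper bounds survive, which is why interpolation (as in \cite{SH,GnHeHiRiWa16}) is used to treat general $p_1$ and $p_2$.
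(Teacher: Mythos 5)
Your proposal is correct and follows essentially the same route as the paper: the binomial expansion of the product kernel into $\kappa_{\setv,\omega}$ times factors $I(\kappa(\cdot,t_j))$, the resulting triangular map on the coefficient sequences $(g_\setu)$, and the Fubini/H\"older estimate with constant $\|\overline{\kappa}_{1,\omega}\|_{L_\infty}$ resp.\ $\|\overline{\kappa}_{\infty,\omega}\|_{L_1}$; the only cosmetic difference is that you bound $\imath_{p_1,p_2}$ directly, while the paper carries out the case $p_1=p_2=1$ for $\imath^{-1}_{p_1,p_2}$ and notes the argument for $\imath_{p_1,p_2}$ is identical. Your explicit near-extremizers flesh out the sharpness step that the paper compresses into ``since H\"older's inequality is sharp, we actually have equality,'' and they correctly expose that exact equality relies on the absence of cancellation in $I(\kappa(\cdot,t))$ (e.g.\ $\kappa\ge0$), an assumption that is implicit in the paper's own proof as well.
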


To give a flavor of the proof, we prove the proposition for $p_1=p_2=1$.
\begin{proof}
For $f=\sum_\setu f_{\setu,\omega}$ we have
\begin{eqnarray*}
  f_{\setu,\omega}&=&K_{\setu,\omega}(g_\setu)\,=\,\int_{D^{|\setu|}}
  g_\setu(\bst_\setu)\,\prod_{j\in\setu}(\kappa(\cdot_j,t_j)
  -\,I(\kappa(\cdot,t_j)))
  \rd\bst_\setu\\
  &=&\sum_{\setv\subseteq\setu}\int_{D^{|\setu|}} g_\setu(\bst_\setu)\,
  \kappa_\setv(\cdot_\setv,\bst_\setv)\,\prod_{j\in\setu\setminus\setv}
  (-1)\,\,I(\kappa(\cdot,t_j))
  \rd\bst_\setu.
\end{eqnarray*}
Therefore
\begin{eqnarray*}
  f&=&\sum_\setu\sum_{\setv\subseteq\setu}
  \int_{D^{|\setu|}} g_\setu(\bst_\setu)\,
  \kappa_\setv(\cdot_\setv,\bst_\setv)\,\prod_{j\in\setu\setminus\setv}
  (-1)\,\,I(\kappa(\cdot,t_j))
  \rd\bst_\setu\\
  &=&\sum_\setv\int_{D^{|\setv|}}\kappa_\setv(\cdot_\setv ,\bst_\setv)
  \sum_{\setw,\setw\cap\setv=\emptyset}\int_{D^{|\setw|}}
  g_{\setv\cup\setw}(\bst_\setv,\bst_\setw)\,\prod_{j\in\setw}
  (-1)\,\,I(\kappa(\cdot,t_j))\rd\bst_\setw\rd\bst_\setv,
\end{eqnarray*}
where $(\bst_\setv,\bst_\setw)=\bst_{\setv \cup \setw}$, which implies that $f\,=\,\sum_\setv K_{\setv}(h_\setv)$ 
with 
\[h_{\setv}(\bst_\setv)\,=\,\sum_{\setw,\setw\cap\setv=\emptyset}
\int_{D^{|\setw|}}
  g_{\setv\cup\setw}(\bst_\setv,\bst_\setw)\,\prod_{j\in\setw}
  (-1)\,I(\kappa(\cdot,t_j))
  \rd\bst_\setw. 
\]
Clearly  
\[\|h_\setv\|_{L_{\setv,1,\psi}}\,\le\,
\sum_{\setw,\setw\cap\setv=\emptyset}\|g_{\setv\cup\setw}\|_{L_{\setv\cup\setw,1,\psi}}
\,
\|\overline{\kappa}_{1,\omega}\|_{L_\infty}^{|\setw|}
\]
and using $\setu=\setv\cup\setw$ we get 
\begin{eqnarray*}
\sum_{\setv}\gamma_\setv^{-1}\,\|h_\setv\|_{L_{\setv,1,\psi}}
&\le&\sum_\setu \gamma_\setu^{-1}\,\|g_\setu\|_{L_{\setu,1,\psi}}\,
\gamma_\setu\sum_{\setv\subseteq\setu}\gamma_\setv^{-1}\,
\|\overline{\kappa}_{1,\omega}\|_{L_\infty}^{|\setu|-|\setv|}
\\  &\le&
\|f\|_{\calF_{s,1,1,\bsgamma,\omega}}\,\max_\setu \sum_{\setv\subseteq\setu}
\frac{\gamma_\setu}{\gamma_\setv}\,
\|\overline{\kappa}_{1,\omega}\|_{L_\infty}^{|\setu|-|\setv|}.
\end{eqnarray*}
This proves the bound on $\|\imath^{-1}\|$. Since the H\"older
inequality is sharp, we actually have equality.
The proof for $\imath$ is identical.
\end{proof}

For product weights $\gamma_\setu=\prod_{j\in\setu}\gamma_j$
the expressions in the proposition above reduce to
\[
\prod_{j=1}^s\left(1+\gamma_j\,
\|\overline{\kappa}_{1,\omega}\|_{L_\infty}\right)
\quad\mbox{if $p_1=1$}
\]
and to
\[
\prod_{j=1}^s\left(1+\gamma_j\,
\|\overline{\kappa}_{\infty,\omega}\|_{L_1}\right)
\quad\mbox{if $p_1=\infty$}.
\]

Applying interpolation theory we get, as in \cite{GnHeHiRiWa16}:

\begin{corollary}\label{wniosek}
Suppose that $\|\overline{\kappa}_{p_1,\omega}\|_{L_{p_1^*}}<\infty$ 
for any $p_1\in[1,\infty]$.
If $p_1\le p_2$ then
\[
\|\imath_{p_1,p_2}\|\,\le\,\|\imath_{1,\infty}\|^{1/p_1-1/p_2}\,
\|\imath_{1,1}\|^{1/p_2}\,\|\imath_{\infty,\infty}\|^{1-1/p_1},
\]
and if $p_2< p_1$ then
\[
\|\imath_{p_1,p_2}\|\,\le\,\|\imath_{\infty,1}\|^{1/p_2-1/p_1}\,
\|\imath_{1,1}\|^{1/p_1}\,\|\imath_{\infty,\infty}\|^{1-1/p_2}.
\]
For product weights we have
\[\|\imath_{p_1,p_2}\|\,\le\,\prod_{j=1}^s\left(1+\gamma_j\,
\|\overline{\kappa}_{1,\omega}\|_{L_\infty}\right)^{1/p_1}\,
\left(1+\gamma_j\,
\|\overline{\kappa}_{\infty,\omega}\|_{L_1}\right)^{1-1/p_1}.
\]
\end{corollary}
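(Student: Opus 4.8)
The plan is to realise the two families of spaces $\calF_{s,p_1,p_2,\bsgamma}$ and $\calF_{s,p_1,p_2,\bsgamma,\omega}$ as members of one interpolation scale and to let the four endpoint identities of Proposition~\ref{prop:ANOVA} do the work. First I would observe that, once $f$ is written through its anchored generators $(g_\setu)_\setu$, both norms are \emph{weighted mixed norms}: the outer norm is an $\ell_{p_2}$-norm over the finite index sets $\setu$, with the fixed weights $\gamma_\setu^{-1}$, of the inner norms $\|g_\setu\|_{L_{\setu,p_1,\psi}}$. Two points make this clean. First, the weights $\gamma_\setu$ are \emph{the same} at every endpoint, hence completely inert under interpolation. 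Second, in the $g$-representation the map $\imath_{p_1,p_2}$ is literally one and the same linear operator $(g_\setu)_\setu\mapsto(h_\setv)_\setv$ --- the one exhibited in the proof of Proposition~\ref{prop:ANOVA} --- independently of $p_1$ and $p_2$; so we are interpolating a single operator whose norms at the corners $(p_1,p_2)\in\{1,\infty\}^2$ are already known.

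Next I would invoke the standard interpolation identity for weighted mixed $\ell_q(L_{r,\psi})$-spaces: complex interpolation (equivalently, Calder\'on products of these Banach lattices) between two such spaces at parameter $\theta$ again yields a space of the same type whose exponents $1/q$ and $1/r$ are the $\theta$-convex combinations of the endpoint exponents, and a fixed operator bounded at both endpoints with norms $M_0,M_1$ is then bounded on the interpolated space with norm at most $M_0^{1-\theta}M_1^{\theta}$. Within the two-parameter corner family this gives both inequalities by an iterated application. For $p_1\le p_2$ the point $(1/p_1,1/p_2)$ lies in the triangle whose vertices correspond to $(p_1,p_2)\in\{(1,\infty),(1,1),(\infty,\infty)\}$: interpolating $\calF_{s,1,\infty,\bsgamma}$ with $\calF_{s,1,1,\bsgamma}$ at parameter $p_1/p_2$ produces $\calF_{s,1,p_2/p_1,\bsgamma}$, and interpolating that with $\calF_{s,\infty,\infty,\bsgamma}$ at parameter $1-1/p_1$ produces $\calF_{s,p_1,p_2,\bsgamma}$; carrying the geometric means through the two steps gives exactly
\[
\|\imath_{p_1,p_2}\|\,\le\,\|\imath_{1,\infty}\|^{1/p_1-1/p_2}\,\|\imath_{1,1}\|^{1/p_2}\,\|\imath_{\infty,\infty}\|^{1-1/p_1}.
\]
The case $p_2<p_1$ is symmetric, using instead the triangle with vertices $(p_1,p_2)\in\{(\infty,1),(1,1),(\infty,\infty)\}$ and the analogous two interpolation steps.

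For product weights the last display follows by bookkeeping. By Proposition~\ref{prop:ANOVA} and the remark after it, the corner norm equals $\prod_{j=1}^s(1+\gamma_j\,\|\overline{\kappa}_{1,\omega}\|_{L_\infty})$ whenever $p_1=1$ and $\prod_{j=1}^s(1+\gamma_j\,\|\overline{\kappa}_{\infty,\omega}\|_{L_1})$ whenever $p_1=\infty$ --- in particular it does not depend on $p_2$. Hence in both regimes the total exponent attached to the first product is $1/p_1$ and that attached to the second is $1-1/p_1$, and the stated product bound drops out.

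The step that needs genuine care, rather than a new idea, is checking that these weighted mixed-norm spaces interpolate with interpolation constant exactly $1$, so that no spurious factor appears; this is delicate precisely at the $p=\infty$ endpoints, and, when $s=\infty$, because of the countable index set. I would handle it exactly as in \cite{GnHeHiRiWa16,SH}: either replace an $L_\infty$- or $\ell_\infty$-endpoint by the closed subspace on which the complex method is well behaved, or --- cleaner in the lattice setting here --- pass to Calder\'on products, for which the identity-type operator $\imath_{p_1,p_2}$ automatically satisfies the geometric-mean bound between its endpoint norms.
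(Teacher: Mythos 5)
Your proposal is correct and is essentially the paper's own route: the paper obtains the corollary by exactly this interpolation argument (Calder\'on products / complex interpolation of the weighted mixed-norm spaces, applied to the fixed operator in the $g$-representation, with the endpoint norms from Proposition~\ref{prop:ANOVA}), citing \cite{SH,GnHeHiRiWa16} for the details you flag at the $\infty$-endpoints. Your exponent bookkeeping for both regimes $p_1\le p_2$ and $p_2<p_1$, and the product-weight reduction, match the stated bounds.
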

It was shown in \cite{SH} for product weights and in \cite{KrPiWa16a}
for a number of different types of weights that the upper bounds
in Corollary \ref{wniosek} are sharp.

Suppose now that $\sum_{j=1}^\infty \gamma_j\,<\,\infty$. 
Then the norms of the embeddings are uniformly bounded,
\[\|\imath_{p_1,p_2}\|\,\le\,\prod_{j=1}^\infty\left(1+\gamma_j\,
\|\overline{\kappa}_{1,\omega}\|_{L_\infty}\right)^{1/p_1}\,
\left(1+\gamma_j\,
\|\overline{\kappa}_{\infty,\omega}\|_{L_1}\right)^{1-1/p_1},
\]
for any $s$ including $s=\infty$. Hence the results of previous sections
are applicable for unanchored  spaces considered in this section. 

\begin{remark}
It is possible to consider even more general unanchored spaces. 
Indeed, consider a linear functional $\ell$ that is continuous
for the space of univariate functions, i.e., with
\[
\|\overline{\ell}_{p_1}\|_{L_{p_1^*}}\,<\,\infty, \quad
\mbox{where}\quad
\overline{\ell}_{p_1}(t)\,:=\,\frac{\ell(\kappa(\cdot,t))}{\psi^{1/p_1}}.
\]
Suppose also that
\[\ell\left(\int_D g(t)\,\kappa(\cdot,t)\rd t\right)\,=\,
\int_D g(t)\,\ell(\kappa(\cdot,t))\rd t\quad
\mbox{for all\ }g\in L_{p_1,\psi}(D).
\]
For nonempty $\setu$, define 
\[K_{\setu,\ell}(g_\setu)(\bsx_\setu)\,:=\,\int_{D^{|\setu|}}g_\setu(\bst_\setu)
\prod_{j\in\setu}(\kappa (x_j,t_j)-\ell(\kappa(\cdot_j;t_j))) \rd \bst_\setu.
\]
Then the corresponding functions $f_{\setu,\ell}=K_{\setu,\ell}(g_\setu)$
satisfy
\[\ell_j(f_{\setu,\ell})\,=\,0\quad\mbox{if\ }j\in\setu.
\]
Here $\ell_j$ denotes the functional $\ell$ acting on functions with respect
to the  $j^{{\rm th}}$ variable.
More formally,
\[
\ell_j\,=\,\bigotimes_{n=1}^s T_n\quad\mbox{with}\quad
T_n\,=\,\left\{\begin{array}{ll} {\rm id} &\mbox{if\ }n\not=j,\\
\ell &\mbox{if\ }n=j,\end{array}\right.
\]
where ${\rm id}$ is an identity operator. 
For instance for $\ell(g)=g(0)+\int_D g(t)\rd t$,
\[\ell_j(f)(\bsx)\,=\,f([\bsx_\setu;\bszero_{-\setu}])+
\int_D f(\bsx)\rd x_j\quad\mbox{with}\quad \setu=\{j\}.
\]

Let $\calF_{s,p_1,p_2,\bsgamma,\ell}$ be the
Banach space of functions
$f=\sum_{\setu, \gamma_\setu>0} K_{\setu,\ell}(g_\setu)$ 
with the norm 
\[
\|f\|_{\calF_{s,p_1,p_2,\bsgamma,\ell}}\,=\,
\left(\sum_{\setu,\gamma_\setu>0}\gamma_\setu^{-1/p_2}\,
\|g_\setu\|_{L_{\setu,p_1,\psi}}^{p_2}\right)^{1/p_2}. 
\]
It is easy to extend all the results of this section provided that
$\|\overline{\ell}_{p_1}\|_{L_{p_1^*}}$ is finite for all $p_1$.
In particular, Proposition \ref{prop:ANOVA} and Corollary
\ref{wniosek} hold with $\|\overline{\kappa}_{p_1,\omega}\|_{L_{p_1^*}}$
replaced by $\|\overline{\ell}_{p_1}\|_{L_{p_1^*}}$. 
\end{remark}

\paragraph{Acknowledgment.} The authors would like to thank two anonymous referees for their suggestions for improving the paper.

\begin{small}
\noindent\textbf{Authors' addresses:}\\

\medskip

 \noindent Aicke Hinrichs\\
 Institute for Analysis\\
 Johannes Kepler University Linz\\
 Altenbergerstr. 69, 4040 Linz, Austria.\\
 \texttt{Aicke.Hinrichs@jku.at}\\
 
 \medskip
 
 \noindent Peter Kritzer\\ 
 Johann Radon Institute for Computational and Applied Mathematics (RICAM)\\
 Austrian Academy of Sciences\\
 Altenbergerstr. 69, 4040 Linz, Austria.\\
 \texttt{peter.kritzer@oeaw.ac.at}\\
 
 \medskip
 
 \noindent Friedrich Pillichshammer\\
 Department of Financial Mathematics and Applied Number Theory\\
 Johannes Kepler University Linz\\
 Altenbergerstr. 69, 4040 Linz, Austria\\
 \texttt{friedrich.pillichshammer@jku.at}\\
 
 \medskip
 
 \noindent G.W. Wasilkowski\\
 Computer Science Department\\
 University of Kentucky\\
 301 David Marksbury Building, Lexington, KY 40506, USA\\  
 \texttt{greg@cs.uky.edu}

 \end{small}

\end{document}